\theoremstyle{plain}
 \theoremstyle{definition}
 \newtheorem*{defn*}{Definition}
\theoremstyle{plain}
\newtheorem{thm}{Theorem}
  \theoremstyle{remark}
  \newtheorem*{rem*}{Remark}
  \theoremstyle{remark}
  \newtheorem*{acknowledgement*}{Acknowledgement}
  \theoremstyle{plain}
  \newtheorem{lem}[thm]{Lemma}
  \theoremstyle{plain}
  \newtheorem{cor}[thm]{Corollary}
  \theoremstyle{definition}
  \newtheorem{defn}[thm]{Definition}
\newcommand{\ben}{\begin{enumerate}}
\newcommand{\een}{\end{enumerate}}
\newcommand{\ble}{\begin{lem}}
\newcommand{\ele}{\end{lem}}
\newcommand{\bth}{\begin{thm}}
\renewcommand{\eth}{\end{thm}}
\newcommand{\bpr}{\begin{prop}}
\newcommand{\epr}{\end{prop}}
\newcommand{\bco}{\begin{cor}}
\newcommand{\eco}{\end{cor}}
\newcommand{\bcon}{\begin{conj}}
\newcommand{\econ}{\end{conj}}
\newcommand{\bde}{\begin{defn}}
\newcommand{\ede}{\end{defn}}
\newcommand{\bex}{\begin{exa}}
\newcommand{\eex}{\end{exa}}
\newcommand{\barr}{\begin{array}}
\newcommand{\earr}{\end{array}}
\newcommand{\btab}{\begin{tabular}}
\newcommand{\etab}{\end{tabular}}
\newcommand{\beq}{\begin{equation}}
\newcommand{\eeq}{\end{equation}}
\newcommand{\bea}{\begin{eqnarray*}}
\newcommand{\eea}{\end{eqnarray*}}
\newcommand{\bce}{\begin{center}}
\newcommand{\ece}{\end{center}}
\newcommand{\bpi}{\begin{picture}}
\newcommand{\epi}{\end{picture}}
\newcommand{\bfi}{\begin{figure} \begin{center}}
\newcommand{\efi}{\end{center} \end{figure}}
\newcommand{\bsl}{\begin{slide}{}}
\newcommand{\esl}{\end{slide}}
\newcommand{\hso}[1]{\hspace{-1pt}}
\newcommand{\sbe}{\subseteq}
\newcommand{\iso}{\cong}
\def\<{\langle}
\def\>{\rangle}
\newcommand{\cA}{{\cal A}}
\newcommand{\cO}{{\cal O}}
\newcommand{\gen}[1]{\langle #1 \rangle}
\newcommand{\inc}{\mathop{\rm inc}\nolimits}
\newcommand{\rad}{\mathop{\rm rad}\nolimits}
\newcommand{\rank}{\mathop{\rm rank}\nolimits}
\renewcommand{\inc}{\star}
\newcommand{\typ}{\mathop{\rm typ}}%
\newcommand{\res}{{\rm res \thinspace }}
\newcommand{\Res}{\mathop{\rm Res}\nolimits}
\newcounter{romanlistctr}
 \def\section{\@startsection {section}{1}{\z@}{-1.5ex plus -.5ex
 minus -.2ex}{1ex plus .2ex}{\large\bf}}
 \def\subsection{\@startsection {subsection}{1}{\z@}{-1.5ex plus -.5ex
 minus -.2ex}{1ex plus .2ex}{\bf}}
\newcommand{\Fq}{\mathbb{F}_q}
\newcommand{\set}[2]{ 
        {\left\{\left.
        #1\vphantom{#2\bigl(\bigr)}\,\right|
        \,#2\right\}}}
\newcommand{\four}[4]{
\left(
\begin{array}{cc}
#1 & #2 \\
#3 & #4
\end{array}
\right) }
\begin{document}
\title{On a quasi-Phan theorem for orthogonal groups}
\author{ {\sc Corneliu Hoffman} \\ {School of Mathematics, University of Birmingham}\\ {Edgbaston Birmingham B15 2TT, United Kingdom}\\ {\tt C.G.Hoffman@bham.ac.uk} \and {\sc Adam Roberts} \\ {Department of Mathematics} \\ {Clarion University of Pennsylvania}\\ {840 Wood Street, Clarion, PA 16214-1232, USA } \\ {\tt aroberts@clarion.edu }}\maketitle

\section{Introduction}

The Curtis-Tits-Phan theory was introduced in \cite{BenShp04,BenGraHofShp03}
to obtain amalgam presentations for some of the classical groups by
geometric means. Such results are useful in the revision project of
the Classification of Finite Simple Groups by Lyons and Solomon. A
series of papers \cite{GraHofShp03,BenShp04,Gra04,GraHofNicShp05},
provided such recognition theorems. The idea is to find a simply connected
geometry on which the group $G$ acts and then use a lemma of Tits
to conclude that the group $G$ is the universal completion of the
amalgam of maximal parabolics.

All the examples cited above use the concept of a flip. These are
involuntary automorphisms of certain twin buildings. We refer to \cite{BenGraHofShp03} for
the details of the general construction. For the purposes of this
paper we will consider a flip induced by an orthogonal polarity. The
geometry is similar to the one in \cite{BenShp04} but the main trouble
here is that the orthogonal group does not act flag transitively on
the \textquotedbl{}flip-flop\textquotedbl{} geometry and so one needs
to consider orbits of this action. A different approach to this problem
was considered in \cite{GraVan06} using a  version of Tits' Lemma
for intransitive actions. A similar modification of the {}``flip-flop''
geometry for symplectic groups can be found in \cite{BloHof08}.

To fix the notations we will consider $\Fq$ a finite field with $q$
elements and we will assume that $q$ is odd and $-1$ is a square
in $\Fq$. Let $V$ be an $n+1$-dimensional vector space over $\Fq$
with a nondegenerate symmetric bilinear form $(\cdot,\cdot)$ that
admits an orthonormal basis (i.e. a form of $+$-type). Similar results
can be deduced for the case when $-1$ is not a square and respectively
if the form is minus-type but the separate analysis is too long. Most
of it can be can be found in \cite{Rob05} and it would appear in
subsequent articles. The case $q$ even is however quite different
and the geometry must be significantly modiffied. It has been considered
by N. Iverson is his forthcoming thesis \cite{Ive09}.

Define $\Gamma'=\set{0<W<V}{W\mbox{ is nondegenerate}}$. This collection
is a pre-geometry with type set $\{1,\ldots,n\}$. Points are the
nondegenerate $1$-dimensional subspaces, lines are the nondegenerate
$2$-dimensional subspaces, etc. Recall that given a basis $\{v_{i}\}$
for a subspace of $V$ we can compute the Gram matrix $G=\left(g_{ij}\right)=(v_{i},v_{j})$.
The Orthogonal group does not act flag-transitively on this geometry.
We will describe one of the orbits.
\begin{defn*}
A square-type subspace $W$ is a nondegenerate subspace of $V$ such
that any basis for $W$ gives a Gram matrix with square determinant.
\\
 A nonsquare-type subspace $W'$ is a nondegenerate subspace of
$V$ such that any basis for $W'$ gives a Gram matrix with nonsquare
determinant. 
\end{defn*}
If the Gram matrix for one basis is a square then the Gram matrix
for any basis is also a square and similarly if the Gram matrix is
a nonsquare. So a square-type subspace and a nonsquare-type subspace
are well-defined concepts. Finally we will sometimes find it useful
to use the more traditional terminology for orthogonal spaces and
use the term plus-type line to refer to $2$-dimensional (nondegenerate)
subspace with a hyperbolic basis and minus-type line to refer to a
$2$-dimensional (nondegenerate) definite subspace. In the case considered,
square-type and plus-type are the same thing.

Our geometry is then $\Gamma=\set{0<W<V}{W\mbox{ is nondegenerate of square-type}}$.

We now state the results. 
\begin{thm}
\label{thm1} If $n\ge3$ or $n=2$ and $q\ge7$ the geometry is connected.
If $n\ge4$ and $q\ge7$ or $n=3$ and $q\ne5,9,13,17,25,29,37,41,53,73$
the geometry is simply connected. If $n\ge4$ and $ $$q\ne5,9,13,17,25,29,37,41,53,73$
the geometry is residually connected and residually simply connected. 
\end{thm}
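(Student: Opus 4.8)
The plan is to prove the three assertions in turn, using throughout that residues in $\Gamma$ are themselves geometries of square-type subspaces in smaller orthogonal spaces. If $p=\langle u\rangle$ is a square-type point then $V=\langle u\rangle\perp u^{\perp}$, and a square-type $W$ with $p<W$ splits as $\langle u\rangle\perp(W\cap u^{\perp})$ with $W\cap u^{\perp}$ square-type in $u^{\perp}$; dually, for a square-type $W$ the map $W'\mapsto W'\cap W^{\perp}$ identifies the square-type subspaces strictly between $W$ and $V$ with the square-type geometry of the orthogonal space $W^{\perp}$, whose dimension and Witt type are determined by those of $W$. Thus every proper residue of $\Gamma$ is a direct sum of two geometries, each of which is, up to Witt type, a smaller instance of the same construction, and all of the numerical hypotheses — the dimension bounds and the exceptional values of $q$ — will come from requiring that each such smaller instance, down to the rank-two residues, be connected, respectively simply connected. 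One must take care that $W^{\perp}$ can a priori have the opposite Witt type, for which the thresholds differ, so a few residues are analysed or excluded separately.

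For connectedness I would argue in the collinearity graph, two square-type points being adjacent when they span a plus-type plane. By Witt's theorem the orthogonal group of $V$ is transitive on the square-type subspaces of each fixed dimension, so it is enough either to connect an arbitrary point to a fixed point $\langle e_{0}\rangle$ of an orthonormal frame $e_{0},\dots,e_{n}$ by elementary moves inside plus-type planes and square-type $3$-spaces, or to show that the connected component of $\langle e_{0}\rangle$ is stabilised by enough orthogonal transformations to generate a point-transitive subgroup. Either way one reduces to a handful of point counts in $V$ — for instance, for an index $k$ outside the support of $u$ the plane $\langle u,e_{k}\rangle$ has Gram determinant $(u,u)$ and is therefore plus-type — and these counts are positive once $n\ge3$; when $n=2$ the ambient $3$-space is too small and positivity of the relevant count forces $q\ge7$. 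Residual connectedness then follows by running the same analysis inside the smaller geometries delivered by the residue decomposition, the binding case being the connectedness of the rank-two residues, which forces $q$ outside the listed set and $n\ge4$ so that every residue is large enough.

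For simple connectedness I would use the standard criterion that a residually connected geometry is simply connected exactly when every closed path in its incidence graph is a product of triangles, that is, of closed paths of length three coming from flags of size three. The proof has the usual three stages: homotope an arbitrary closed path into the point-line truncation, using connectedness of the residues of lines and planes; then triangulate it, rewriting it as a product of closed paths of bounded length, using that the relevant triples of elements lie in a common residue sufficiently often — a condition controlled by point counts that are positive for $q\ge7$ when $n\ge4$; and finally fill each short closed path. When $n\ge4$ there are enough higher-rank square-type subspaces to fill these short paths and the argument goes through for all $q\ge7$; when $n=3$ the geometry is much thinner and instead one verifies directly, by an essentially finite analysis of the rank-three geometry of square-type $1$-, $2$- and $3$-dimensional subspaces, that every short closed path bounds, which succeeds for precisely the $q$ outside the exceptional list — those being the fields for which the governing point counts fail to be positive and a genuine nontrivial cover appears. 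Residual simple connectedness for $n\ge4$ is then obtained by applying these results inside each residue via the decomposition of the first paragraph, and it is that application that produces the exceptional set and the hypothesis $n\ge4$ in the last part of the statement.

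The main obstacle is exactly this base case, and more generally making every point count effective at the stated thresholds: for each small orthogonal space that occurs one must determine precisely which pairs, and which triples, of square-type subspaces admit a common square-type overspace, or a common square-type point completing them to a plane, and then show that the resulting character sums are positive outside the listed fields — the finiteness of that list being the reason $n=3$ calls for a computation rather than an argument. A secondary and more routine difficulty is verifying that $\Gamma$ is firm and residually connected in the sense the topological simple-connectedness criterion demands, which again uses the residue decomposition, and keeping track of the Witt type of $W^{\perp}$ as $W$ ranges, since a residue that is a square-type geometry of the opposite sign obeys different connectivity thresholds and must be treated separately.
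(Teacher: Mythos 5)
Your high-level strategy matches the paper's: reduce to the point-line truncation, decompose short cycles (triangles, $4$-gons, $5$-gons), and handle residues by identifying each one with the square-type geometry on a smaller orthogonal space $U_i^{\perp}\cap U_{i+k+1}$. For $n\ge 4$ your claim that there are ``enough higher-rank square-type subspaces to fill'' is the right reason and is exactly what the paper does (any $3$-space containing a line embeds in a proper square-type subspace, so all triangles become geometric). One small point: your worry that $W^{\perp}$ may have the ``opposite Witt type'' does not arise here — since $V$ has an orthonormal basis and $W$ is square-type, $W^{\perp}$ is also square-type, so every residue is genuinely a smaller copy of the same geometry.

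The real gap is in your treatment of $n=3$. You attribute the exceptional set of $q$'s to ``point counts'' or ``character sums'' failing to be positive, and you propose a routine finite check. That is not what happens. In rank $3$ the triangles cannot be filled in by an ambient square-type $4$-space (there is none other than $V$), so the paper instead builds an explicit octahedron-like configuration around a degenerate triangle and is forced to produce a vertex $v_3=xe+yf$ for which \emph{four} determinants $2xy,\,2xy-x^2,\,2xy-\alpha^2 x^2,\,2xy-\beta^2 x^2$ are simultaneously nonzero squares; after normalizing this is the condition of Lemma \ref{lem:Joe's lemma}, namely that given $c$ with $c^2+1$ a nonzero square one can write $c^2=a^2+b^2$ with $a^2+1,\,b^2+1$ also nonzero squares. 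This is not a positivity-of-a-character-sum argument: the relevant locus is an explicit curve of arithmetic genus $9$, one invokes Hasse–Weil to get $q+1-18\sqrt{q}>48$ (so $q>413$ suffices), and the remaining $q$ are checked by computer, which is where the specific list $5,9,13,17,25,29,37,41,53,73$ comes from. Without identifying this Diophantine obstruction and the curve-counting argument your proof cannot produce, or even motivate, that list. You also assert that at the exceptional $q$ ``a genuine nontrivial cover appears''; the paper explicitly does not know this and says some of those cases are likely still simply connected, so this is an overclaim your argument cannot support.

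A secondary divergence: for connectedness you suggest an orbit/stabiliser argument via Witt's theorem. The paper argues directly in the collinearity graph by a three-case analysis on $\langle a,b\rangle$ (square-type, nonsquare-type, degenerate) to get diameter $2$ for $n\ge 3$ and diameter $3$ for $n=2,\,q\ge 7$; this is more elementary and also yields the diameter bounds that are reused later. Your route would likely work but would need to be made explicit to extract the $q\ge 7$ threshold at $n=2$.
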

Applying Tits' lemma \ref{thm:[Tits'-Lemma]} we get
\begin{thm}
\label{thm2} Let $G=SO(V)$ acting on the geometry $\Gamma$ and
${\cal A}_{k}$ the corresponding amalgam of parabolics of rank at
most $k$. Then\end{thm}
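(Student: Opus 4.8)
The plan is to deduce Theorem \ref{thm2} from Theorem \ref{thm1} by means of Tits' Lemma \ref{thm:[Tits'-Lemma]}. The single point not already contained in Theorem \ref{thm1} is that $G=SO(V)$ acts flag-transitively on $\Gamma$, so I would settle that first.

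A maximal flag of $\Gamma$ is a chain $0<W_1<\dots<W_n<V$ with $\dim W_i=i$ and every $W_i$ of square-type, and such chains exist: the initial segments of an orthonormal basis all have Gram determinant $1$, and a plane spanned by two orthonormal vectors is hyperbolic precisely because $-1$ is a square in $\Fq$, hence of plus- (= square-) type. By Witt's extension theorem $O(V)$ is then transitive on the flags of each prescribed type, since an isometry between two square-type subspaces of equal dimension extends to $V$, and such isometries can be chosen compatibly along a chain. To pass from $O(V)$ to $SO(V)$ it is enough to exhibit, for any flag $F$ with largest member $W$, an element of $\Stab_{O(V)}(F)$ of determinant $-1$: since $\dim W\le n$ the space $W^{\perp}$ is nondegenerate of positive dimension and (as $q$ is odd) contains a nonsingular vector $v$, and the reflection in $v$ fixes $W$, hence $F$, pointwise while having determinant $-1$. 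Thus $O(V)=SO(V)\cdot\Stab_{O(V)}(F)$ and $SO(V)$ is transitive on flags of every type; the same reflection argument performed inside a residue shows in addition that each residue is acted on flag-transitively by its stabiliser, which is what the residual form of Tits' Lemma needs.

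With flag-transitivity in hand I would feed the conclusions of Theorem \ref{thm1} into Tits' Lemma \ref{thm:[Tits'-Lemma]}. Simple connectivity of the connected geometry $\Gamma$ identifies $G$ with the universal completion of its amalgam of maximal parabolics. When $\Gamma$ is moreover residually connected and residually simply connected --- once more exactly under the hypotheses recorded in Theorem \ref{thm1} --- a parabolic of higher rank is, by Tits' Lemma applied within its residue (which is simply connected) together with the flag-transitivity just established, the universal completion of the parabolics of rank $\le 2$ of that residue; the direct limit therefore collapses and $G=SO(V)$ is the universal completion of $\mathcal{A}_k$ for every admissible $k\ge 2$. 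It then remains only to copy the numerical restrictions from the appropriate clause of Theorem \ref{thm1}.

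I do not anticipate a genuine obstacle. All of the substance --- connectivity, simple connectivity, their residual analogues, and the exact exceptional sets of pairs $(n,q)$ --- is already delivered by Theorem \ref{thm1}, and flag-transitivity of $SO(V)$ is routine once the reflection trick is in place; the only real care is bookkeeping, namely transporting the exclusion list for $q$ verbatim so that the form of (residual) simple connectivity one is using is genuinely available, and invoking the reduction from the full parabolic amalgam down to $\mathcal{A}_k$ only for those $k$ that Theorem \ref{thm1} licenses.
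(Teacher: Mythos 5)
Your proposal follows the paper's route exactly: flag-transitivity plus Tits' Lemma \ref{thm:[Tits'-Lemma]} (and its residual variant) applied to the connectivity statements of Theorem \ref{thm1}. The one place where you go beyond what the paper writes down is worth flagging as an improvement rather than a divergence: the paper's flag-transitivity lemma is stated for $H:=O_{n}^{+}(q)$ (in effect the full orthogonal group of $V$), using Witt's extension theorem, and then Theorem \ref{thm2} is asserted for $G=SO(V)$ without comment. Your reflection argument --- choosing an anisotropic $v\in W^{\perp}$ for the top member $W$ of a flag $F$, noting the reflection $r_v$ has determinant $-1$, fixes $W$ (hence $F$) pointwise, and therefore $O(V)=SO(V)\cdot\Stab_{O(V)}(F)$ --- is precisely the missing step that passes flag-transitivity from $O(V)$ down to $SO(V)$; since $\dim W\le n<\dim V$ this always applies, and the same trick works inside residues. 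This is exactly the right fix. Two small bookkeeping cautions when you transcribe the numerical hypotheses: the paper's Theorem \ref{thm2}(1) uses $q\ge 9$ for $n\ge 4$ even though Theorem \ref{thm1} gives simple connectivity already at $q\ge 7$, so be sure to inherit whichever bound the paper actually needs (there may be a residual flag-transitivity or degeneracy issue at $q=7$); and for part (2) you must cite the residual connectedness together with residual simple connectedness clause of Theorem \ref{thm1}, since the residual form of Tits' Lemma requires both.
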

\begin{enumerate}
\item If $n\ge4$ and $q\ge9$ or $n=3$ and $q\ne5,9,13,17,25,29,37,41,53,73$.
then $G$ is the universal completion of $\cA$. 
\item If $n\ge4$ and $q\ne5,9,13,17,25,29,37,41,53,73$ then $G$ is the
universal completion of $\cA_{2}$.\end{enumerate}
\begin{rem*}
The result improves on \cite{GraVan06}. More precisely, our amalgam
$\cA$ (respectively $\cA_{3}$) contains less subgroups than the
one in \cite{GraVan06} giving a more efficient presentation. Moreover
the conditions on $q$ are identical for $n\ge4$ and only a little
worse for the case $n=3$. Finally we do not know whether the exceptional
cases are genuine exceptions. In particular the cases $q=25,41,53,73$
are likeley to give a simply connected geometry.
\end{rem*}

\begin{rem*}
The second part of Theorem \ref{thm2} states that the orthogonal
group can be recognized from an amalgam of orthogonal groups in dimension
two and three only. One would be tempted to look for a slightly stronger
result, in terms of {}``Phan''-systems as in \cite{BenShp04}. That
is to say that the group is the universal completion of any amalgam
of rank one and rank two parabolics with only the local information.
Unfortunately the amalgam is not uniquely determined by the rank $1$
and $2$ subgroups alone. Indeed the amalgam $\cA_{2}$ is almost
like a Coxeter presentation (since the rank one parabolics are dihedral).
There are many nonisomorphic such amalgams. For example if $n=3$,
consider the geometry whose objects of type one and two are square
type subspaces of the corresponding dimension but objects of type
three are nonsquare-type subspaces of dimension three. The amalgam
of maximal parabolics for this geometry has the same {}``Phan''-structure
as our amalgam but it is not isomorphic to it. Theorem \ref{thm2} states
that the orthogonal group is the universal completion of the one amalgam
that lies inside it. It seems that if one is willing to enlarge the
amalgam with some extra pieces of the amalgam in \cite{GraVan06}
this problems goes away and the amalgam becomes unique.
\end{rem*}
The article is structured as follows. In Section \ref{geometry} we
introduce the relevant geometry and amalgam notions, in Section \ref{prelim}
we collect a few technical lemmas to be used in the rest of the paper.
In Section \ref{con} we prove the connectedness results in Theorem
\ref{thm1}. Section \ref{nge4} and \ref{n=00003D3} finish the proof
of Theorem \ref{thm1} in the case $n\ge4$ and $n=3$ respectively. 
\begin{acknowledgement*}
The higher rank results in this article are part of the second author's
Ph.D. thesis \cite{Rob05} under the supervision of the first author.
Also we would like to thank Joe Wetherell for supplying us with a
proof of Lemma \ref{lem:Joe's lemma} allowing us to deal with the
small rank case hence improving the results.
\end{acknowledgement*}

\section{Geometries, Amalgams and Tits' Lemma}

\label{geometry}

\subsection{Geometries}

For our viewpoint on geometries we'll use the following definitions
from Buekenhout~\cite{Bue95}.

A \emph{pre-geometry} over a \emph{type set} $I$ is a triple $\Gamma=(\cO,\typ,\inc)$,
where $\cO$ is a collection of \emph{objects} or \emph{elements},
$I$ is a set of \emph{types}, $\inc$ is a binary symmetric and reflexive
relation, called the \emph{incidence relation} and $\typ\colon\cO\to I$
is a \emph{type function} such that whenever $X\inc Y$, then either
$X=Y$ or $\typ(X)\ne\typ(Y)$.

The \emph{rank} of the pre-geometry $\Gamma$ is the size of $\typ(\cO)$.
A \emph{flag} $F$ is a (possibly empty) collection of pairwise incident
objects. Its \emph{type} (resp. \emph{cotype}) is $\typ(F)$ (resp.
$I-\typ(F)$). The \emph{rank} of $F$ is $\rank(F)=|\typ(F)|$.
The \emph{type} of $F$ is $\typ(F)=\{\typ(X)\mid X\in F\}$. A \emph{chamber}
is a flag $C$ of type $I$.

A pre-geometry $\Gamma$ is a \emph{geometry} if $\typ(\cO)=I$ and
if $\Gamma$ is \emph{ transversal}, that is, if any flag is contained
in a chamber.

The \emph{incidence graph} of the pre-geometry $\Gamma=(\cO,\typ,\inc)$
over $I$ is the graph $(\cO,\inc)$. This is a multipartite graph
whose parts are indexed by $I$. We call $\Gamma$ \emph{connected}
if its incidence graph is connected.

The \emph{ residue} of a flag $F$ is the pre-geometry $\Res_{\Gamma}(F)=(\cO_{F},\typ|_{\cO_{F}},\inc|_{\cO_{F}})$
over $I-\typ(F)$ induced on the collection $\cO_{F}$ of all objects
in $\cO-F$ incident to $F$. We call $\Gamma$ \emph{residually connected}
if for every flag of rank at least $2$ the corresponding residue
is connected.

For a subset $K\sbe I$ the $K$-\emph{shadow} of a flag $F$ is the
collection of all $K$-flags incident to $F$.

Let $\Gamma$ be a connected geometry over the finite set $I$. A
\emph{ path of length $k$} is a path $x_{0},\ldots,x_{k}$ in the
incidence graph. We do not allow repetitions, that is, $x_{i}\ne x_{i+1}$
for all $0\le i<k$. A \emph{cycle based at an element $x$} is a
path $x_{0},\ldots,x_{k}$ in which $x_{0}=x=x_{k}$. Two paths $\gamma$
and $\delta$ are \emph{homotopically equivalent} if one can be obtained
from the other by inserting or eliminating cycles of length $2$ (returns)
or $3$ (triangles). We denote this by $\gamma\simeq\delta$. The homotopy
classes of cycles based at an element $x$ form a group under concatenation.
This group is called the \emph{fundamental group of $\Gamma$ based
at $x$} and is denoted $\Pi_{1}(\Gamma,x)$. If $\Gamma$ is (path)
connected, then the isomorphism type of this group does not depend
on $x$ and we call this group simply the \emph{fundamental group}
of $\Gamma$ and denote it $\Pi_{1}(\Gamma)$. We call $\Gamma$ \emph{simply
connected} if $\Pi_{1}(\Gamma)$ is trivial. We call $\Gamma$ \emph{residually
simply connected} if all its residues are simply connected.

\subsection{Amalgams}

While the concept of a geometry is quite interesting in its own right,
the main reason for introducing it is to study the associated group
amalgam. We start with a few definitions and the statement of Tits'
Lemma. For a more detailed treatment see \cite{Ser03}
\begin{enumerate}
\item An amalgam of groups is a set ${\cal A}$ together with a collection
of subsets $\{G_{i}\}_{i\in I}$ so that ${\cal A}=\cup_{i\in I}G_{i}$
and a partially defined multiplication operation $*$ such that 

\begin{enumerate}
\item $\forall i,*:G_{i}\times G_{i}\mapsto G_{i}$ makes $G_{i}$ defines
a group structure for $G_{i}$. 
\item $\forall i,j, G_{i}\cap G_{j}$ is a subgroup of both $G_{i}$ and $G_{j}$. 
\item If $a,b\in{\cal A}$ then one can define $a*b$ if and only if $\exists i\in I$
with $a,b\in G_{i}$. 
\end{enumerate}
\item A completion of an amalgam ${\cal A}=\cup_{i}G_{i}$is group $G$
together with group homeomorphisms $\phi_{i}:G_{i}\mapsto G$ so that
$G$ is generated by the images of the $\phi_{i}$
\item A universal completion of an amalgam ${\cal A}$ is a completion $\tilde{G}$
so that for any other completion $G$ there exists a unique surjective
homomorphism $\Phi:\tilde{G}\mapsto G$ compatible with each of the
maps from the amalgam. 
\end{enumerate}
Given a geometry $\Gamma$ and a group $G$ acting flag transitively
on $\Gamma$ one can define an amalgam as follows. Fix $c=\{x_{i}\}_{i\in I}$
a maximal flag of $\Gamma$ and define $G_{i}=Stab_{G}(x_{i})$ the
stabiliser of the object of type $i$. Then ${\cal A}=\cup_{I\in I}G_{i}$
is called the amalgam of maximal parabolics associated to the geometry
$\Gamma$. The following theorem connects the two notions (see \cite{Tit86})
: 
\begin{thm}
{[}Tits' Lemma{]}\label{thm:[Tits'-Lemma]} Let $\Gamma$ be a connected
geometry and $G$ be a flag transitive group of automorphisms of $\Gamma$.
The geometry is simply connected if and only if $G$ is isomorphic
to the universal completion of the amalgam of maximal parabolics associated
to $\Gamma.$ 
\end{thm}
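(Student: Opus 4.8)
The plan is to follow the standard argument, which in fact yields the sharper statement that $\Pi_{1}(\Gamma)$ is isomorphic to the kernel of the canonical epimorphism from the universal completion of $\cA$ onto $G$.

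First I would translate everything into group theory. Fix the maximal flag $c=\{x_{i}\}_{i\in I}$ and put $G_{i}=\Stab_{G}(x_{i})$. Flag transitivity identifies $\Gamma$ with a coset geometry: $gx_{i}\mapsto gG_{i}$ is a bijection from the type-$i$ objects of $\Gamma$ onto $G/G_{i}$, and, since $G_{i}$ acts transitively on the type-$j$ objects incident with $x_{i}$, incidence of objects corresponds to nonempty intersection of the associated cosets; moreover connectedness of $\Gamma$ gives $G=\langle G_{i}:i\in I\rangle$. Let $\widetilde{G}$ be the universal completion of $\cA=\bigcup_{i}G_{i}$; concretely $\widetilde{G}=F/R$, where $F$ is the free product of the groups $G_{i}$ and $R$ is the normal closure of the words $g_{(i)}g_{(j)}^{-1}$ over all $g\in G_{i}\cap G_{j}$ and all $i,j\in I$ (here $g_{(i)}$ denotes $g$ read in the $i$-th free factor). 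Since $G$, equipped with the inclusions $G_{i}\hookrightarrow G$, is a completion of $\cA$, there is a canonical epimorphism $\pi\colon\widetilde{G}\twoheadrightarrow G$ restricting to the identity on each $G_{i}$; in particular each $G_{i}$ embeds in $\widetilde{G}$ and $\pi$ is injective on each $G_{i}$. Once an isomorphism $\Pi_{1}(\Gamma)\cong\ker\pi$ is in hand the theorem is immediate, because then $\Gamma$ is simply connected if and only if $\ker\pi=1$, if and only if $\pi$ is an isomorphism, if and only if $G$ is isomorphic to the universal completion of $\cA$.

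Next I would build the dictionary between closed paths and words. Since the fundamental group does not depend on the base object, fix a convenient base object $x_{a}$. Given a closed path $x_{a}=w_{0},w_{1},\dots,w_{m}=x_{a}$ in the incidence graph, write each $w_{t}$ as a coset, pick an element $p_{t}\in w_{t}\cap w_{t+1}$ for each $t$, and record the word $b_{1}b_{2}\cdots b_{r}$ built from these data, each $b_{s}$ lying in one of the $G_{i}$ and with $b_{1}\cdots b_{r}=1$ in $G$; conversely a word in the $G_{i}$ with letters in alternating factors and product $1$ recovers a closed path via the partial-product rule $w_{t}=(b_{1}\cdots b_{t})G_{j_{t}}$. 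Sending a path to the class in $\widetilde{G}$ of its word — this class lies in $\ker\pi$, the word being trivial in $G$, and it does not depend on the choice of the $p_{t}$ — defines a map from closed paths at $x_{a}$ onto $\ker\pi$ (every element of $\ker\pi$ is represented by a word in the $G_{i}$ trivial in $G$, hence by a path) that is multiplicative for concatenation of paths. The assertion to be proved is that this map induces an isomorphism $\Pi_{1}(\Gamma)\cong\ker\pi$; equivalently, that two closed paths are homotopic, via returns and triangles, exactly when their words represent the same element of $\widetilde{G}$.

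This last assertion is the heart of the matter, and I expect it to be where the real work lies. The relator subgroup $R$ is generated by two elementary moves on words in the $G_{i}$: (i) merging two consecutive letters lying in a common factor $G_{i}$ into their product in $G_{i}$, together with insertion or deletion of a trivial letter; and (ii) replacing a letter $g\in G_{i}\cap G_{j}$ read in $G_{i}$ by the same element $g$ read in $G_{j}$. A direct, if somewhat technical, computation with cosets shows that under the dictionary above the equivalence of closed paths generated by returns and triangles matches the equivalence of words generated by moves (i) and (ii): each insertion or deletion of a return or of a triangle is effected by finitely many moves of types (i) and (ii) on the associated word — for a triangle this amounts to the observation that a common element of the three cosets meeting at the triangle may be transported among the three parabolics, an instance of move (ii) — and, conversely, each move (i) and (ii) is effected by inserting or deleting returns and triangles. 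This yields $\Pi_{1}(\Gamma)\cong\ker\pi$, hence the theorem. The one genuinely delicate point is precisely this verification: realising each presentation move of $\widetilde{G}$ by an explicit homotopy of paths and conversely, while keeping track of the base object and of the passage between arbitrary words and words with letters in alternating factors. The remaining ingredients — the coset description of $\Gamma$, the presentation of the universal completion, and the deduction of the theorem from $\Pi_{1}(\Gamma)\cong\ker\pi$ — are routine.
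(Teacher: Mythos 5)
The paper does not prove this theorem; it is stated with a citation to Tits \cite{Tit86}, so there is no ``paper's own proof'' to compare against. Your sketch is the standard direct argument: identify $\Gamma$ with the coset geometry of the parabolics, present the universal completion $\widetilde{G}$ as a quotient of the free product of the $G_{i}$, and show $\Pi_{1}(\Gamma)\cong\ker(\widetilde{G}\twoheadrightarrow G)$ by matching path homotopy (returns and triangles) with the presentation relators. The outline is sound, and you are candid that the last correspondence is the real content; note that the case of triangles relies on the fact that three pairwise incident objects necessarily have pairwise distinct types, hence form a flag, hence (by transversality of the geometry) lie in a common chamber --- this is what licenses ``transporting a common element among the three parabolics,'' and it is worth making explicit. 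A commonly used alternative packaging of the same verification, and arguably the one closest to Tits' original, is to form the coset pregeometry $\widetilde{\Gamma}$ of the (embedded copies of the) $G_{i}$ inside $\widetilde{G}$ and show that $\pi$ induces a covering $\widetilde{\Gamma}\to\Gamma$ with $\widetilde{\Gamma}$ connected and simply connected, so that $\ker\pi$ is the deck group and hence $\Pi_{1}(\Gamma)$; this trades your word-by-word bookkeeping for a more conceptual covering-space argument, and you may find it easier to make the ``delicate point'' airtight that way.
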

As above assume $\Gamma$ is a geometry and $G$ a group of automorphisms
that acts flag transitively. Consider a chamber $c$ and the amalgam
$\cA_{k}$ of all the stabilisers of subflags of rank $k$ of $c$.
Inductively one can prove the following theorem.
\begin{thm}
$G$ is the universal completion of $\cA_{k}$ if and only if all
the residues of rank more than $k$ are simply connected. 
\end{thm}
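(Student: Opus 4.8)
The plan is a downward induction on $k$, with Tits' Lemma (Theorem~\ref{thm:[Tits'-Lemma]}) as the engine that drives the inductive step. Fix a chamber $c=\{x_i\}_{i\in I}$, $|I|=n$, and for $J\sbe I$ put $c_J=\{x_i:i\in J\}$ and $G_J=\Stab_G(c_J)$, so that $G_\emptyset=G$; then $\cA_k$ is the amalgam of the $G_J$ with $\Res_\Gamma(c_J)$ of rank $\le k$, i.e.\ $|J|\ge n-k$, together with the overlaps $G_J\cap G_{J'}=G_{J\cup J'}$. Write $\tilde G(\cA_k)$ for its universal completion. One has $\cA_k\sbe\cA_{k+1}$, and $\cA_{k+1}$ is obtained from $\cA_k$ by adjoining the pieces $G_J$ with $|J|=n-k-1$, indexed by the residues $\Res_\Gamma(c_J)$ of rank exactly $k+1$. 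The base of the induction is $k\ge n$: there $\cA_k$ already contains $G_\emptyset=G$, so $\tilde G(\cA_k)\iso G$, while there are no residues of rank exceeding $k$, so the biconditional holds vacuously.

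For the inductive step, assume the statement for $k+1$. First record the chain of natural surjections $\tilde G(\cA_k)\twoheadrightarrow\tilde G(\cA_{k+1})\twoheadrightarrow G$, obtained by restricting completions along $\cA_k\sbe\cA_{k+1}$ and from universality, whose composite is the canonical map $\tilde G(\cA_k)\twoheadrightarrow G$. (A completion of $\cA_{k+1}$ restricts to one of $\cA_k$ because each new piece $G_J$ is generated by the $G_{J\cup\{i\}}$, $i\notin J$, already in $\cA_k$, this being the flag-transitive group $G_J$ generated by its maximal parabolics on the connected geometry $\Res_\Gamma(c_J)$.) Hence $G\iso\tilde G(\cA_k)$ iff both arrows are isomorphisms, i.e.\ iff $G\iso\tilde G(\cA_{k+1})$ \emph{and} $\tilde G(\cA_k)\iso\tilde G(\cA_{k+1})$. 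By the inductive hypothesis the first condition is simple connectivity of all residues of rank $>k+1$, so the whole theorem reduces to the claim: $\tilde G(\cA_k)\iso\tilde G(\cA_{k+1})$ if and only if every residue of rank $k+1$ is simply connected.

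The implication ``$\Leftarrow$'' of the claim is the routine half. If $\Res_\Gamma(c_J)$ is simply connected for every $J$ with $|J|=n-k-1$, then Tits' Lemma, applied to the flag-transitive action of $G_J$ on the rank-$(k+1)$ geometry $\Res_\Gamma(c_J)$ whose maximal parabolics are the $G_{J\cup\{i\}}$, shows that $G_J$ is \emph{itself} the universal completion of the sub-amalgam $\cB_J=\{\,G_{J\cup\{i\}}:i\notin J\,\}\sbe\cA_k$. A short bookkeeping argument then shows that every completion of $\cA_k$ extends, uniquely, to a completion of $\cA_{k+1}$: the maps out of $\cB_J$ factor uniquely through $G_J$, and compatibility with the remaining pieces of $\cA_{k+1}$ is automatic because all the relevant intersections $G_J\cap G_{J'}$ already occur in $\cA_k$. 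Hence $\cA_k$ and $\cA_{k+1}$ have the same universal completion.

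The converse ``$\Rightarrow$'' is the part I expect to require real work, and it is the main obstacle. Suppose a residue $R=\Res_\Gamma(c_{J_0})$ of rank $k+1$ is \emph{not} simply connected; one must show that $\tilde G(\cA_k)\to\tilde G(\cA_{k+1})$ is not injective. By Tits' Lemma the canonical surjection $\tilde G(\cB_{J_0})\to G_{J_0}$ has a nontrivial kernel $N$; since $G_{J_0}$ is a piece of $\cA_{k+1}$ it embeds in $\tilde G(\cA_{k+1})$, and there the images of the $G_{J_0\cup\{i\}}$ generate exactly that copy of $G_{J_0}$, so the composite $\tilde G(\cB_{J_0})\to\tilde G(\cA_k)\to\tilde G(\cA_{k+1})$ annihilates $N$. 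It therefore suffices to know that $\tilde G(\cB_{J_0})$ maps \emph{injectively} into $\tilde G(\cA_k)$, equivalently that the local data of the residue $R$ is not further collapsed in the rank-$\le k$ amalgam. Concretely this amounts to patching a nontrivial cover of $R$ to a nontrivial cover of $\Gamma$ that is trivial on all residues of rank $\le k$ (so that it carries a $\tilde G(\cA_k)$-action strictly above $G$); this patching step, which is where residual connectedness of $\Gamma$ is genuinely needed, is the heart of the matter. (Equivalently: identify $\ker\bigl(\tilde G(\cA_k)\to G\bigr)$ with the fundamental group of the chamber system of $\Gamma$ relative to residues of rank $\le k$, and show it is generated by the fundamental groups of the rank-$(k+1)$ residues.) Once the patching lemma is in hand, the claim follows, and with it the theorem by the induction above.
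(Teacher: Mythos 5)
The paper offers no proof of this theorem; the text merely remarks that ``Inductively one can prove'' it, so there is no official argument to compare against. Your downward-induction framework is the right one, and the reduction to the claim ``$\tilde G(\cA_k)\iso\tilde G(\cA_{k+1})$ iff every rank-$(k+1)$ residue is simply connected'' is sound. Your ``$\Leftarrow$'' argument is correct, and is in fact the only direction the paper actually uses (for part (2) of Theorem~\ref{thm2}); note, though, that it requires the residues $\Res_\Gamma(c_J)$ to be connected before Tits' Lemma can be invoked on them, i.e.\ residual connectedness of $\Gamma$ must be added to the hypotheses (the paper's statement leaves this tacit as well).

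The ``$\Rightarrow$'' direction you leave genuinely open, and this is a real gap rather than a bookkeeping detail. Everything hinges on the claim that $\tilde G(\cB_{J_0})\to\tilde G(\cA_k)$ is injective, but that is not automatic: adjoining pieces to an amalgam and passing to the universal completion can perfectly well collapse a subgroup, since the new generators may impose relations that kill $N=\ker\bigl(\tilde G(\cB_{J_0})\to G_{J_0}\bigr)$. Your parenthetical reformulation --- that the kernel of $\tilde G(\cA_k)\to G$ is generated by the fundamental groups of the rank-$(k+1)$ residues --- at best reproves ``$\Leftarrow$'': triviality of a group generated by certain images does not force those images, let alone the groups $\pi_1(R)$ themselves, to vanish. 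The standard way to close the gap is via $m$-coverings of chamber systems: one shows that the coset chamber system of $\tilde G(\cA_k)$ on the parabolics is the universal $k$-cover of $\cC(\Gamma)$, and that a universal $k$-cover restricts over a rank-$(k+1)$ residue $R$ to the ordinary universal cover of $R$, so that triviality of the universal $k$-cover forces $\pi_1(R)=1$. Until that, or an equivalent ``patching lemma,'' is actually proved, the ``$\Rightarrow$'' half of the statement remains open.
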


\section{Preliminaries}

\label{prelim} Here we collect some technical lemmas. We first start
with a lemma about finite fields. We would like to thank Joe Wetherell
for supplying us with the proof
\begin{lem}
\label{lem:Joe's lemma} Assume $q$ is odd and $q>47$. Then for
any $c\in\Fq$ so that $c^{2}+1$ is a nonzero square there exist
elements $a,b\in\Fq^{*}$ so that $a^{2}+1,b^{2}+1$ are all nonzero
squares in $\Fq$ and $c^{2}=a^{2}+b^{2}$. \end{lem}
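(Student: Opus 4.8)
The plan is to treat this as a point-counting problem over $\Fq$, the kind of statement that is routinely attacked via the Weil bound for curves. Fix $c\in\Fq$ with $c^2+1$ a nonzero square, and consider the affine variety defined by the conditions we want: introduce new variables to absorb the ``is a nonzero square'' constraints. Concretely, we want to count $\Fq$-points $(a,b,s,t)$ with $a^2+b^2=c^2$, $a^2+1=s^2$, $b^2+1=t^2$, and then subtract the contribution of degenerate points (those with $a=0$, $b=0$, $s=0$, or $t=0$). The first step is therefore to parametrize the conic $a^2+b^2=c^2$ rationally over $\Fq$ — since $-1$ is a square and $c^2+1$ is a nonzero square, one checks the conic has an $\Fq$-point and hence admits a rational parametrization $a=a(u),\ b=b(u)$ by a parameter $u\in\PP^1(\Fq)$, with $a,b$ given by explicit rational functions of degree $2$ in $u$.

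Substituting this parametrization, the two remaining conditions $a(u)^2+1=\square$ and $b(u)^2+1=\square$ become conditions $f(u)=\square$ and $g(u)=\square$ for explicit rational functions $f,g$ of bounded degree (after clearing denominators, polynomials of degree at most $4$). So the core step is: the curve $C:\ s^2=f(u),\ t^2=g(u)$ has genus bounded by an absolute constant (one estimates it from the degrees via Riemann–Hurwitz — it is a fibre product of two hyperelliptic-type covers of $\PP^1$), and therefore by the Hasse–Weil bound $|\#C(\Fq) - q| \le 2\mathfrak{g}\sqrt{q} + O(1)$. Since the genus $\mathfrak{g}$ and the $O(1)$ correction (bad primes, points at infinity, ramification) are absolute constants, this gives $\#C(\Fq) \ge q - K\sqrt q$ for an explicit constant $K$; the number $47$ in the hypothesis is exactly where $q - K\sqrt q$ becomes positive after one further subtracts the bounded number of degenerate solutions (those where some coordinate vanishes, or where $f$ or $g$ has a repeated root making the model singular). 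One should also take care of the sign ambiguity: a point of $C$ gives $a^2+1=s^2$, which is what we want, and we need $a,b\neq 0$, i.e. $s^2\neq 1$ and $t^2\neq 1$ — again only finitely many values of $u$ excluded.

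The main obstacle is the genus computation and, relatedly, keeping the constant $K$ small enough that the threshold really is $47$ and not something larger. The fibre product $s^2=f(u)$, $t^2=g(u)$ generically has genus around $3$ (it is a $(\ZZ/2)^2$-cover of $\PP^1$ branched over the zeros of $f$, of $g$, and possibly $\infty$), so the naive Weil bound gives roughly $q - 6\sqrt q$, and one needs $q \ge 47$ to be comfortably past $36$; the slack is thin, so the ramification analysis — exactly which points of $\PP^1$ ramify in the $s$-cover, the $t$-cover, or both, and whether $f$ and $g$ share roots — must be done carefully rather than estimated crudely. A secondary nuisance is that $f$ and $g$, being built from the conic parametrization, may factor or acquire repeated roots for special $c$; one handles this by checking that the relevant discriminants are nonzero polynomials in $c$ and excluding their (boundedly many) roots, which only worsens the constant by an absolute amount. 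Alternatively, and perhaps more cleanly, one can avoid genus bookkeeping by a direct character-sum estimate: write the number of solutions as $\frac{1}{4}\sum_{u}\big(1+\chi(f(u))\big)\big(1+\chi(g(u))\big)$ with $\chi$ the quadratic character, expand, and bound the three nontrivial sums $\sum\chi(f)$, $\sum\chi(g)$, $\sum\chi(fg)$ each by Weil's character-sum bound $(\deg - 1)\sqrt q$; this is the same estimate repackaged and makes the constant completely explicit, which is the form I would actually write down.
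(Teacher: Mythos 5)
Your route is sound and genuinely different from the paper's. The paper rationally parametrizes the three conics $A^2=a^2+1$, $B^2=b^2+1$, $C^2=c^2+1$ by parameters $x,y,z$, fixes $z=z_0$ to match the given $c$, and then imposes $c^2=a^2+b^2$; after clearing denominators this yields a bidegree-$(4,4)$ curve in $(x,y)$ of arithmetic genus $9$, so Hasse--Weil gives at least $q+1-18\sqrt q$ points, of which at most $48$ must be discarded, forcing $q>413$ before the estimate bites. You instead parametrize the single Pythagorean conic $a^2+b^2=c^2$ and push the two square conditions onto the parameter line, obtaining the $(\ZZ/2)^2$-cover $s^2=f(u)$, $t^2=g(u)$ with $\deg f,\deg g\le 4$. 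For generic $c$ this curve has genus $5$, not $3$ as you estimate (Riemann--Hurwitz with eight simple branch points on a degree-$4$ cover of $\PP^1$ gives $2g-2=-8+16$), but that is still better than $9$, so your route lowers the raw Weil threshold; your character-sum repackaging makes the constant explicit, with $\sum\chi(f)$, $\sum\chi(g)$, $\sum\chi(fg)$ bounded by $3\sqrt q$, $3\sqrt q$, $7\sqrt q$. Two caveats, though. You must treat $c=0$ separately, since with $-1$ a square the conic $a^2+b^2=0$ degenerates into a pair of lines and admits no rational parametrization (the paper's proof quietly has the same gap, as $z_0=0$ is among its excluded parameter values). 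More importantly, no amount of care with ramification gets the pure estimate down to $q>47$: even the paper, after a computer check of all $q<413$, finds that $q=53$ and $q=73$ are genuine exceptions, so the hypothesis ``$q>47$'' in the statement is not by itself sufficient, and both your argument and the paper's necessarily end in a finite verification of small cases that you should not expect to avoid.
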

\begin{proof}
Consider the equations \begin{eqnarray}
A^{2} & = & a^{2}+1\\
B^{2} & = & b^{2}+1\\
C^{2} & = & c^{2}+1\\
c^{2} & = & a^{2}+b^{2}\end{eqnarray}
 where we want $a,b,c,A,B,C$ to all be finite and non-zero elements
of $\Fq.$

Forget equation (4) and the finite, non-zero condition for the moment.
The solutions to equations (1), (2), and (3) are parametrized by arbitrary
triples $(x,y,z)$ via

\[
a=\frac{2x}{1-x^{2}}\ A=\frac{1+x^{2}}{1-x^{2}}\]
 \[
b=\frac{2y}{1-y^{2}}\ B=\frac{1+y^{2}}{1-y^{2}}\]
 \[
c=\frac{2z}{1-z^{2}}\ C=\frac{1+z^{2}}{1-z^{2}}\]

The finite and non-zero condition is exactly satisfied if each of
$x,y,z$ avoid the 6 values $\infty,0,1,-1,i,-i$ So this defines an
open subset $U$ of the affine 3-space.

Within $U$ we want the surface satisfying the equation $c^{2}=a^{2}+b^{2}$;
in other words:

\[
\frac{4z^{2}}{(1-z^{2})^{2}}=\frac{4y^{2}}{(1-y^{2})^{2}}+\frac{4x^{2}}{(1-x^{2})^{2}}\]
 Suppose that $q>413$, and choose the value $z_{0}$ corresponding
to $c$. Define $e=\frac{z_{0}^{2}}{(1-z_{0}^{2})^{2}}$. Setting $z=z_{0}$
gives us the equation for a curve; clearing denominators and dividing
by 4, this equation is

\[
ex^{4}y^{4}-(2e+1)x^{4}y^{2}+ex^{4}-(2e+1)x^{2}y^{4}+(4e+4)x^{2}y^{2}-(2e+1)x^{2}+ey^{4}-(2e+1)y^{2}+e=0\]

This curve has arithmetic genus $9$, so, by the Hasse-Weil the bound of the
number of rational points over $\Fq$ is at least $q+1-18\sqrt{q}$.
We need to avoid at most $6$ $x$-values and $6$ $y$-values. Each $x$-value occurs
in at most $4$ points and likewise for $y$-values, so we need to avoid
at most $48$ points. Since $q>413$, a quick computation shows that
$q+1-18\sqrt{q}>48.1>48.$

It follows that there is a least one allowed point on the $z=z_{0}$
curve, so the surface also has at least one point.

Suppose that $q<413$

One can check remaining $91$ values of $q$ by computer -- the $91$ values
are

3, 5, 7, 9, 11, 13, 17, 19, 23, 25, 27, 29, 31, 37, 41, 43, 47, 49,
53, 59, 61, 67, 71, 73, 79, 81, 83, 89, 97, 101, 103, 107, 109, 113,
121, 125, 127, 131, 137, 139, 149, 151, 157, 163, 167, 169, 173, 179,
181, 191, 193, 197, 199, 211, 223, 227, 229, 233, 239, 241, 243, 251,
257, 263, 269, 271, 277, 281, 283, 289, 293, 307, 311, 313, 317, 331,
337, 343, 347, 349, 353, 359, 361, 367, 373, 379, 383, 389, 397, 401,
409

We find that if $q\not\in[3,5,7,9,11,13,17,19,23,25,27,29,31,37,41,43,47,53,59,61,73,103]$
the problem has solution. Therefore the only bad cases are $q=5,9,13,17,25,29,37,41,53,73$.
Even in these cases if $q\ge37$, about half the coices for $z$ will
work. 

We now prove a few elementary but useful results about small dimensional
subspaces.\end{proof}
\begin{lem}
On any plus-type line there are $\frac{q-1}{2}$ square-type points,
$\frac{q-1}{2}$ nonsquare-type points and $2$ degenerate points.
On any minus-type line there are $\frac{q+1}{2}$ square points and
$\frac{q+1}{2}$ nonsquare points. \end{lem}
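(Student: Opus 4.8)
The type of a point $p=\langle v\rangle$ of a line depends only on $(v,v)$ modulo squares: replacing $v$ by $\lambda v$ multiplies $(v,v)$ by the square $\lambda^{2}$, so $p$ is degenerate, square-type, or nonsquare-type according as $(v,v)$ is $0$, a nonzero square, or a nonsquare. There are $q+1$ points on any line, so it suffices to count in each case.

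For a plus-type line $L$, fix a hyperbolic basis $e,f$ (so $(e,e)=(f,f)=0$, $(e,f)=1$). Its points are $\langle f\rangle$ together with $\langle e+tf\rangle$ for $t\in\Fq$, and $(e+tf,e+tf)=2t$. Hence $\langle e\rangle=\langle e+0\cdot f\rangle$ and $\langle f\rangle$ are exactly the two degenerate points, and for $t\in\Fq^{*}$ the point $\langle e+tf\rangle$ is square-type iff $2t$ is a square. Since $t\mapsto 2t$ permutes $\Fq^{*}$ and exactly $\tfrac{q-1}{2}$ of its elements are squares, we get $\tfrac{q-1}{2}$ square-type points, $\tfrac{q-1}{2}$ nonsquare-type points, and $2$ degenerate points.

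For a minus-type line $L$, which is anisotropic with nonsquare Gram determinant, pick an orthogonal basis and rescale to put the form in the shape $x^{2}+\delta y^{2}$ with $\delta$ a fixed nonsquare; then $1+\delta t^{2}\ne 0$ for all $t$, so there are no degenerate points. The point $\langle(0,1)\rangle$ has $(v,v)=\delta$, a nonsquare. The remaining $q$ points are $\langle(1,t)\rangle$, $t\in\Fq$, with $(v,v)=1+\delta t^{2}$, so the claim reduces to showing $1+\delta t^{2}$ is a nonzero square for exactly $\tfrac{q+1}{2}$ values of $t$. Let $\chi$ be the quadratic character. The number of such $t$ is $\tfrac12\bigl(q+\sum_{t\in\Fq}\chi(\delta t^{2}+1)\bigr)$, and by the standard evaluation $\sum_{t\in\Fq}\chi(at^{2}+bt+c)=-\chi(a)$ when $a\ne 0$ and $b^{2}-4ac\ne 0$ — here $a=\delta$, $b=0$, $c=1$, with $b^{2}-4ac=-4\delta$ a nonsquare, hence nonzero — this sum equals $-\chi(\delta)=1$. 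So the count is $\tfrac{q+1}{2}$. Adding $\langle(0,1)\rangle$ gives $\tfrac{q+1}{2}$ square-type points and $(q+1)-\tfrac{q+1}{2}=\tfrac{q+1}{2}$ nonsquare-type points. (Alternatively, identify $L$ with $\Fqsq$ equipped with the norm form $N(v)=v^{q+1}$, whose Gram determinant is the nonsquare $-\delta$; since $N\colon\Fqsq^{*}\to\Fq^{*}$ is a surjective homomorphism with kernel of size $q+1$, exactly $\tfrac{q^{2}-1}{2}$ nonzero vectors have square norm, and each point accounts for $q-1$ of them, all or none — giving $\tfrac{q^{2}-1}{2(q-1)}=\tfrac{q+1}{2}$ square-type points.)

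The plus-type count and all the degeneracy bookkeeping are immediate; the only real content is the count of square values of $1+\delta t^{2}$ on the minus-type line, so the crux is the quadratic-character sum identity (or, equivalently, surjectivity of the norm map), which I expect to be the single step worth spelling out.
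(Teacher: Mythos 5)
Your proof is correct, and it makes explicit what the paper merely asserts: the paper's entire proof is the sentence ``This is an elementary computation inside the dihedral groups $O_2^{\epsilon}(q)$,'' so your write-up is the first place the count is actually carried out. The plus-type case (hyperbolic basis, norm $2t$, bijection $t\mapsto 2t$ on $\Fq^*$) is the routine bookkeeping and matches what anyone would do. For the minus-type case you give two arguments, and it is worth noting they differ in flavor from what the paper gestures at. Your parenthetical via the norm map $N\colon\Fqsq^*\to\Fq^*$ is essentially the ``dihedral group'' computation the authors have in mind: $O_2^-(q)$ is dihedral of order $2(q+1)$, the rotation subgroup is $\ker N$, and the orbit sizes of the stabilizer on points give the $\frac{q+1}{2}$ split immediately. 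Your primary argument, the quadratic-character sum $\sum_t\chi(\delta t^2+1)=-\chi(\delta)$, is a genuinely different route: it replaces any appeal to the group structure with a direct exponential-sum evaluation, which is arguably more self-contained but also a heavier hammer for a two-variable form. One small point you may want to smooth: the step ``rescale to put the form in the shape $x^2+\delta y^2$'' deserves a half-sentence (one of $\alpha,\beta$ in a diagonalization $\alpha x^2+\beta y^2$ with $\alpha\beta$ a nonsquare must itself be a square, and after rescaling and possibly swapping coordinates you get $x^2+\delta y^2$); as written it reads as if any anisotropic form has this shape on the nose, which needs that one-line justification.
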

\begin{proof}
This is an elementary computation inside the dihedral groups $O_{2}^{\epsilon}(q)$. \end{proof}
\begin{cor}
\label{cor:sum of squares} If $\alpha\in\Fq$ then there are $q-1$
pairs of $x$ and $y$ such that $x^{2}+y^{2}=\alpha$. \end{cor}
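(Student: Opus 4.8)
The plan is to deduce this from the preceding lemma by counting vectors of a prescribed norm in the rank-$2$ orthogonal space on which the form is $x^{2}+y^{2}$. Concretely, fix a $2$-dimensional subspace $L$ with orthonormal basis $e_{1},e_{2}$, so that $(xe_{1}+ye_{2},xe_{1}+ye_{2})=x^{2}+y^{2}$; its Gram matrix is the identity, of determinant $1$, hence $L$ is a square-type (equivalently, since $-1$ is a square, plus-type) line, and the set of pairs $(x,y)\in\Fq^{2}$ with $x^{2}+y^{2}=\alpha$ is exactly the set $N_{\alpha}$ of vectors of $L$ of norm $\alpha$. It therefore suffices to show $|N_{\alpha}|=q-1$.

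Next, partition the $q^{2}-1$ nonzero vectors of $L$ among its $q+1$ points: a point $P=\langle v\rangle$ carries the $q-1$ vectors $tv$ with $t\in\Fq^{*}$, on which the norm takes the values $t^{2}(v,v)$. Since $t\mapsto t^{2}$ is two-to-one onto the nonzero squares (the fibres $\{t,-t\}$ being genuine pairs as $q$ is odd), the norms occurring on $P$ are: all nonzero squares, each attained twice, if $P$ is square-type; all nonsquares, each attained twice, if $P$ is nonsquare-type; and only $0$ if $P$ is degenerate. Hence, for $\alpha$ a nonzero square, the vectors of $N_{\alpha}$ lie only on the square-type points, exactly two on each, and by the preceding lemma there are $\frac{q-1}{2}$ such points, so $|N_{\alpha}|=2\cdot\frac{q-1}{2}=q-1$; for $\alpha$ a nonsquare the identical argument with the $\frac{q-1}{2}$ nonsquare-type points again gives $|N_{\alpha}|=q-1$. (For $\alpha=0$ the same bookkeeping yields instead $|N_{0}|=1+2(q-1)=2q-1$, namely the zero vector together with the nonzero vectors on the two degenerate points of $L$; this case is not used below.)

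There is no genuine obstacle here beyond the bookkeeping: one must only check that every point of $L$ contributes $q-1$ nonzero vectors, that the $v\mapsto-v$ symmetry produces the factor $2$ with no exceptional fixed vectors (using $q$ odd), and that $L$ is genuinely of plus-type so that the census of points supplied by the preceding lemma is the applicable one (using that $-1$ is a square in $\Fq$).
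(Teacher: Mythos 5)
Your proof is correct and follows essentially the same strategy as the paper's: count vectors of prescribed norm in a plus-type plane by distributing them over its $q+1$ projective points and invoking the preceding lemma's census of square-type, nonsquare-type and degenerate points, with the factor $2$ coming from $v\mapsto -v$. The difference is one of thoroughness. The paper's own proof only carries out the count for $\alpha=1$ and then stops, tacitly leaving the other nonzero square classes to symmetry and saying nothing about $\alpha=0$; you treat all three cases explicitly. In doing so you expose a small inaccuracy in the statement as written: for $\alpha=0$ the count is $2q-1$ (the zero vector together with the $2(q-1)$ nonzero vectors on the two isotropic points), not $q-1$, so the hypothesis should really be $\alpha\in\Fq^{*}$. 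This is harmless for the paper, since the corollary is only ever applied with $\alpha$ nonzero (namely $\alpha=1$ in Lemma~\ref{lem:degenerate triangles} and $\alpha=a^{2}/t$ with $a,t\ne0$ in the triangle-decomposition lemma), but your version is the one a careful reader should keep. One could also reach the same conclusion more directly from the factorisation $x^{2}+y^{2}=(x+iy)(x-iy)$ with $i^{2}=-1$, which gives $q-1$ solutions for $\alpha\ne0$ and $2q-1$ for $\alpha=0$ at a glance; that would avoid the geometric census altogether, though the census is of course in the spirit of the paper.
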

\begin{proof}
Let W be a two dimensional orthogonal space of square-type. Let $\{a,b\}$
be an orthonormal basis for $W$. Since $-1$ is a square then there
are $(q-1)/2$ square-type points in $W$. In particular there are
$(q-1)/2$ linearly independent choices of $xa+yb$ such that $x^{2}+y^{2}=1$.
Notice the only scalars we can multiply $xa+yb$ by without changing
the norm are $\pm1$ and so there are $q-1$ choices of $x$ and $y$
which work.\end{proof}
\begin{lem}
\label{lem:no tricky subspaces} Let $W$ be a nondegenerate $3$-dimensional
subspace of $V$. If $W$ is of square-type then any degenerate $2$-subspace
of $W$ contains $q$ square-type points. If $W$ is of nonsquare-type,
then any degenerate $2$-subspace of $W$ contains $q$ nonsquare-type
points. \end{lem}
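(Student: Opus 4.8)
The plan is to describe all degenerate $2$-dimensional subspaces of a nondegenerate $3$-space $W$ explicitly and then read off the norms of their points. So let $U\subset W$ be a degenerate plane. Since the form on $W$ is nondegenerate, $\dim U^{\perp}=1$, so the radical $U\cap U^{\perp}$ of $U$ (perpendicular taken in $W$) has dimension at most $1$; being degenerate, $U$ has radical of dimension exactly $1$, hence $U^{\perp}\subseteq U$ and $U^{\perp}=\langle r\rangle$ for a nonzero isotropic vector $r$. Then $U\subseteq\langle r\rangle^{\perp}$, and as $\dim\langle r\rangle^{\perp}=2=\dim U$ we conclude $U=r^{\perp}$. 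Thus the degenerate planes of $W$ are exactly the spaces $r^{\perp}$ with $r$ a nonzero isotropic vector.

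Next I would compute norms on $U=r^{\perp}$. Fix $w\in U\setminus\langle r\rangle$, so that $U=\langle r,w\rangle$. If $(w,w)$ were $0$ then $U$ would be totally isotropic (recall $(w,r)=0$ as $w\in r^{\perp}$), which is impossible in a nondegenerate $3$-space; hence $d:=(w,w)\ne 0$. The $q+1$ points of $U$ are $\langle r\rangle$ together with the $q$ points $\langle w+\lambda r\rangle$, $\lambda\in\Fq$, and since $(w,r)=(r,r)=0$ each of the latter has norm $(w+\lambda r,w+\lambda r)=d$. So $\langle r\rangle$ is the unique degenerate point of $U$, and the remaining $q$ points are all nondegenerate and all of the \emph{same} type: square-type if $d$ is a nonzero square, nonsquare-type if $d$ is a nonsquare.

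Finally I would connect $d$ with the type of $W$. Complete $r$ to a hyperbolic pair $\{r,s\}$ inside $W$ (possible since $r$ is isotropic and the form is nondegenerate; one may take $s$ isotropic after subtracting a multiple of $r$, using that $q$ is odd) and replace $w$ by $w-(w,s)\,r$, which still lies in $U$ and still satisfies $(w,w)=d$ and $(w,r)=0$, and now also $(w,s)=0$. In the basis $\{r,s,w\}$ the Gram matrix of $W$ is the orthogonal sum of a hyperbolic plane with a $1$-dimensional space of norm $d$, so it has determinant $-d$. Therefore $W$ is of square-type if and only if $-d$ is a square, and since $-1$ is a square in $\Fq$ this holds if and only if $d$ is a square. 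Combining with the previous step: if $W$ is of square-type then $d$ is a square and all $q$ nondegenerate points of $U$ are square-type, and if $W$ is of nonsquare-type then $d$ is a nonsquare and all $q$ are nonsquare-type.

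The whole argument is bookkeeping in the $3$-dimensional space $W$; the two places that need a little attention are the structural claim that the radical of a degenerate plane in a nondegenerate $3$-space is exactly $1$-dimensional (which is what forces $U=r^{\perp}$ and so pins the plane down completely), and the observation that the $q$ nondegenerate points of $U$ all have the same norm $d$ precisely because their representatives differ by vectors of the radical $\langle r\rangle$. One could also split off $\langle w\rangle$ from $W$ and invoke the preceding lemma on plus- and minus-type lines, but the direct computation above seems cleaner and treats the square- and nonsquare-type cases uniformly.
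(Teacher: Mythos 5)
Your proof is correct, but it takes a genuinely different route from the paper's. The paper argues by contradiction: supposing $W$ is square-type and some $a\in U$ is nonsquare-type, the radical $\langle r\rangle$ of $U$ lies in $a^{\perp}\cap W$, but $a^{\perp}\cap W$ has nonsquare discriminant and hence (since $-1$ is a square) is anisotropic, so it cannot contain the isotropic vector $r$ --- a contradiction, and the nonsquare case is symmetric. You instead give a direct, constructive description: you show every degenerate plane in $W$ is exactly $r^{\perp}$ for its (necessarily $1$-dimensional) radical $\langle r\rangle$, observe that the $q$ non-radical points $\langle w+\lambda r\rangle$ all have the \emph{same} norm $d=(w,w)$ because their representatives differ by radical vectors, and then pin down the square class of $d$ from $\det W$ via an explicit hyperbolic-plus-line Gram matrix. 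What your version buys is more information and uniformity: it shows the non-radical points have identical norm (not merely identical type), handles both cases of the lemma in a single computation, and exhibits the degenerate planes explicitly as $r^{\perp}$'s, which is a useful structural fact in its own right. The paper's proof is shorter and leans on the classification of nondegenerate lines (a nonsquare-type line being anisotropic when $-1$ is a square), which is exactly the kind of local fact it uses repeatedly elsewhere. Both are sound.
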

\begin{proof}
Assume that $W$ is of square-type and let $U$ a degenerate subspace
of dimension two. Assume $a\in U$ is nonsquare-type and $r$ is the
radical of $U$. It follows that $r\in a^{\perp}\cap W$. However
since $W$ is square-type, $a^{\perp}\cap W$ is a nonsquare-type
line hence it does not contain any isotropic subspaces, a contradiction.The
case when $W$ is of nonsquare-type is identical. \end{proof}
\begin{lem}
If $W$ is a $3$-dimensional subspace of $V$ with $1$-dimensional
radical, then all $2$-dimensional subspaces of $W$ not containing
the radical are nondegenerate of the same type. \end{lem}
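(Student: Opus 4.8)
Let $W$ be a $3$-dimensional subspace of $V$ with $1$-dimensional radical $r = \mathrm{Rad}(W)$. I want to show that any $2$-dimensional subspace $U \le W$ with $r \not\le U$ is nondegenerate, and that all such $U$ have the same type (plus or minus, equivalently square or nonsquare).

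**Plan.**
First I would verify nondegeneracy. Since $\dim U = 2$ and $\dim r = 1$, the condition $r \not\le U$ forces $U \cap r = 0$, so $W = U \oplus r$. If $U$ were degenerate, its radical $\mathrm{Rad}(U)$ would be a nonzero isotropic vector $u \in U$ that is orthogonal to all of $U$; since $r$ is orthogonal to all of $W \supseteq U$, the vector $u$ would then be orthogonal to $U + r = W$, forcing $u \in \mathrm{Rad}(W) = r$. But $u \in U$ and $U \cap r = 0$, a contradiction. Hence every such $U$ is nondegenerate.

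Next, to see all such $U$ have the same type, I would fix a vector $w \in W \setminus r$ with $w$ anisotropic (this exists: the form restricted to a complement of $r$ in $W$ is nondegenerate of rank $2$, hence not totally isotropic, so it has an anisotropic vector; and any vector in $\mathrm{Rad}(U)$-free $U$... more cleanly, pick any nondegenerate line $L = \langle w\rangle \le W$, which exists by the nondegeneracy argument just given applied to any $2$-space and then restricting, or directly since $W$ has anisotropic vectors). Every nondegenerate $2$-subspace $U$ with $r \not\le U$ decomposes as $U = (U \cap w^{\perp}) \oplus \langle w'\rangle$ for a suitable generator $w'$ after adjusting, but the cleaner route is: the map sending $w$ to the anisotropic vector together with the one remaining anisotropic direction shows $\det(\text{Gram of } U)$ has the form $(w,w)\cdot d$ where $d$ is the norm of a vector spanning $U \cap w^{\perp}$ inside $W$. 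I would argue instead via the discriminant of $W/r$: the quotient space $\bar W = W/r$ carries a well-defined nondegenerate form of rank $2$, and the natural projection $\pi\colon W \to \bar W$ restricts to an isometry on any complement of $r$, in particular on any $U$ with $r \not\le U$ (since $U \cap r = 0$ and $\dim U = 2 = \dim \bar W$, so $\pi|_U$ is a linear isomorphism, and it is an isometry because $U$ is orthogonal to $r$). Thus every such $U$ is isometric to $\bar W$, so they all have the same determinant class, hence the same type.

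**Main obstacle.**
The only delicate point is justifying that the induced form on $\bar W = W/r$ is well-defined and nondegenerate, and that $\pi|_U$ is an isometry — this uses precisely that $r \subseteq U^{\perp}$ (because $r \subseteq W^{\perp_W} \subseteq$ the orthogonal of everything in $W$) so that $(u_1 + r, u_2 + r) := (u_1, u_2)$ is independent of coset representatives from $W$. Nondegeneracy of the induced form follows because $\mathrm{Rad}(\bar W)$ pulls back to $\mathrm{Rad}(W) = r$, which is zero in $\bar W$. Once this is in place the type statement is immediate: two isometric nondegenerate $2$-spaces have equal Gram determinants up to squares, so they are simultaneously square-type (plus-type) or nonsquare-type (minus-type), by the definitions and the dichotomy recalled after the Definition. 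I expect this to be short; the substance is entirely the reduction modulo the radical.
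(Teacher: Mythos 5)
Your proof is correct and takes essentially the same approach as the paper: the nondegeneracy argument (a radical vector of $U$ would together with $r$ enlarge the radical of $W$) is identical, and your quotient-space formulation of the type argument is just a more abstract phrasing of the paper's concrete observation that if $U=\langle e,f\rangle$ then $U'=\langle e+\alpha r, f+\beta r\rangle$ has the same Gram matrix since $r$ lies in the radical.
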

\begin{proof}
Let $\gen{r}=\rad{W}$ and let $U$ be a $2$-dimensional subspace
of $W$ not containing $r$. If $v\in\rad{U}$ then $\gen{v,r}$ is
in the radical of $W$, a contradiction. Finally assume $U,U'$ are
$2$-dimensional subspaces of $W$ not containing the radical. If
$U=\gen{e,f}$ then there are $\alpha,\beta\in\Fq$ with $U'=\gen{e+\alpha r,f+\beta r}$
and it is quite obvious now that the two bases have the same Gram
matrix. 
\end{proof}

\section{Connectedness}

\label{con}

A geometry is connected if and only if its collinearity graph is connected.
The diameter of a geometry is the maximum of the distances between
points in the collinearity graph of the geometry. 
\begin{lem}
If $n\ge3$ then $\Gamma$ is connected of diameter $2$. \end{lem}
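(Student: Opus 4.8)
We need to show that when $n \ge 3$, any two square-type points $\langle p \rangle$ and $\langle p' \rangle$ can be joined by a path of length at most $2$ in the collinearity graph, where two points are collinear when they span a square-type line (i.e.\ a plus-type line, in our situation). The natural strategy is: first handle the case where $\langle p, p' \rangle$ is already a plus-type line (distance $1$), then in all remaining cases produce a single intermediate square-type point $\langle r \rangle$ such that both $\langle p, r \rangle$ and $\langle p', r \rangle$ are plus-type lines. So the crux is a counting/existence argument for $\langle r \rangle$.

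**The plan.** First I would dispose of the degenerate configurations. If $p \perp p'$ and $p + p'$ spans a square-type line, we are done. If $\langle p, p' \rangle$ is a minus-type line, or if $p$ and $p'$ are not orthogonal and the line is the "wrong" type, we look for $\langle r \rangle$ outside $\langle p, p' \rangle$. Here is the key move: choose a vector $r$ of the form $r = \lambda p + \mu p' + w$ where $w$ lies in $\langle p, p' \rangle^{\perp}$ (which is nonempty and nondegenerate of dimension $n-1 \ge 2$ since $\langle p,p'\rangle$ is at most $2$-dimensional and nondegenerate when the two points are distinct and the line is nondegenerate; the case of a degenerate or $1$-dimensional span is even easier). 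The norm of $r$ and the Gram determinants of $\langle p, r\rangle$ and $\langle p', r\rangle$ are explicit quadratic expressions in $\lambda, \mu$ and $(w,w)$. Using Corollary~\ref{cor:sum of squares}, for \emph{any} prescribed target value $\alpha \in \Fq$ there are $q-1$ ways to realize $\alpha = x^2 + y^2$; iterating this inside the $(n-1)$-dimensional complement lets us hit essentially any norm value for the $\langle p,p'\rangle^{\perp}$-component of $r$. Counting the constraints — we must avoid $r$ being isotropic, avoid $\langle p, r\rangle$ and $\langle p', r\rangle$ being degenerate or minus-type, i.e.\ force two Gram determinants to be nonzero squares — each is a "half the field" type condition, and with the freedom coming from $w$ ranging over a $(\ge 2)$-dimensional space the number of good choices is positive. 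A cleaner packaging: reduce to Lemma~\ref{lem:no tricky subspaces} and the line-count Lemma (on a plus-type line there are $(q-1)/2$ square-type points) applied inside a suitable $3$-dimensional square-type subspace containing both $p$ and $p'$.

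**Streamlined version via $3$-spaces.** Actually the slickest route is: pick any nondegenerate square-type $3$-dimensional subspace $W$ with $\langle p\rangle, \langle p'\rangle \le W$ — such a $W$ exists because the orthogonal complement of $\langle p, p'\rangle$ has dimension $n - \dim\langle p,p'\rangle \ge n-2 \ge 1$, and by adjusting the added vector's norm (again via Corollary~\ref{cor:sum of squares} to realize the needed square determinant) we can arrange $\det(\mathrm{Gram})$ to be a square. Inside such a square-type $W \cong$ the $3$-dimensional orthogonal space, an explicit check in $O_3(q)$ shows the collinearity graph on square-type points of $W$ has diameter $2$: given $\langle p\rangle,\langle p'\rangle$ in $W$, the pencil of points in $W$ collinear with $\langle p\rangle$ covers all but a bounded set, and similarly for $\langle p'\rangle$; since $q \ge$ the relevant small bound (and $n\ge3$ guarantees $W$ exists), the two pencils intersect. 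This reduces everything to a fixed, finite computation in dimension $3$.

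**Main obstacle.** The hard part is the bookkeeping in the existence argument for the intermediate point: one must simultaneously guarantee three things — that $r$ is anisotropic, that $\langle p, r\rangle$ is plus-type, and that $\langle p', r\rangle$ is plus-type — each of which is a square/nonsquare condition on a determinant, and one must show the good locus is nonempty for all $q$ in the claimed range (here, simply $q$ large enough given $n\ge3$; the diameter-$2$ claim itself needs no exceptional set). Keeping the quadratic-form computations organized so that the counting is transparent — rather than a morass of Gram determinants — is where care is needed; folding the computation into the already-proved line-count lemma and Lemma~\ref{lem:no tricky subspaces}, restricted to a single $3$-dimensional square-type subspace, is what makes it short.
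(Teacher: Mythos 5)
Your ``streamlined version via $3$-spaces'' does not work, and it is in fact the crucial point where your argument goes wrong. You claim that after embedding $\langle p\rangle$ and $\langle p'\rangle$ in a square-type $3$-dimensional subspace $W$, ``an explicit check in $O_3(q)$ shows the collinearity graph on square-type points of $W$ has diameter $2$.'' This is false: in a $3$-dimensional square-type space the diameter is $3$, not $2$ (this is exactly the content of the paper's Lemma~\ref{lem:connect} for $n=2$, and it even requires $q\ge 7$). The reason is concrete: if $\langle p,p'\rangle$ is a nonsquare-type line inside the square-type $3$-space $W$, then $\langle p,p'\rangle^\perp\cap W$ is a $1$-dimensional nonsquare-type subspace, so there is \emph{no} square-type point of $W$ orthogonal to both $p$ and $p'$, and hence no common neighbour inside $W$. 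Reducing to a computation inside $W$ therefore cannot produce a path of length~$2$.

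The fix, which is what the paper actually does, is to use the extra dimension that $n\ge 3$ provides: one looks for a square-type point $c$ with $c\perp a$ and $c\perp b$ in all of $V$, not inside a $3$-space containing $a,b$. Once $c$ is orthogonal to both, $\langle a,c\rangle$ and $\langle b,c\rangle$ are automatically diagonal with square entries, hence square-type lines, with no counting and no restriction on $q$. Concretely: if $\langle a,b\rangle$ is nonsquare-type then $\langle a,b\rangle^\perp$ is nonsquare-type of dimension $n-1\ge 2$ and so contains a square-type point; if $\langle a,b\rangle$ is degenerate with radical $\langle e\rangle$, complete $e$ to a hyperbolic pair $\{e,f\}$ in $a^\perp$ and take $c$ square-type in $\langle a,e,f\rangle^\perp$ (nonempty since $\dim V\ge 4$). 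Your first, counting-based plan could perhaps be made to work, but as written it hand-waves the simultaneity of three square/nonsquare conditions, it is not clear it avoids an exceptional set in $q$, and it misses the orthogonality trick that makes the whole argument trivial and $q$-independent.
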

\begin{proof}
Let $a$ and $b$ be square-type points. We will consider three different
cases depending on what type of subspace $\gen{a,b}$ is. \\
 Case 1: $\gen{a,b}$ is of square-type. Then $a$ and $b$ are
collinear and hence connected with distance $1$. \\
 Case 2: $\gen{a,b}$ is of nonsquare-type. Then $\gen{a,b}^{\perp}$
is also of nonsquare-type and hence has at least one square-type point
$c$. This means that $\gen{a,c}$ and $\gen{a,b}$ are both square-type
lines. Then $a$ is connected to $b$ with distance $2$. \\
 Case 3: $\gen{a,b}$ is a degenerate subspace. Let $\gen{e}=\mbox{rad}(\gen{a,b})$
Since $\gen{a}^{\perp}$ is nondegenerate we can complete $e$ to
a hyperbolic basis $\{e,f\}$ in $\gen{a}^{\perp}$. Finally we notice
that $\gen{a,e,f}$ is nondegenerate of square-type so we can let
$c\in\gen{a,e,f}^{\perp}$ of square-type. It follows that $c$ is
collinear to both $a$ and $b$. \end{proof}
\begin{lem}
\label{lem:connect} Let $n=2$. If $q\geq7$ \inputencoding{latin1}{then
$\Gamma$ is connected }\inputencoding{latin9}with diameter three. \end{lem}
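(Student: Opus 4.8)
The plan is to work in the three-dimensional space $V$ with $n=2$, so that the points of $\Gamma$ are the square-type (equivalently plus-type) $1$-subspaces and the lines are the square-type planes. Fix two square-type points $a,b$; as in the previous lemma I would split into cases according to the type of $\gen{a,b}$. If $\gen{a,b}$ is square-type, $a$ and $b$ are already collinear. If $\gen{a,b}$ is degenerate with radical $\gen{e}$, then complete $e$ to a hyperbolic basis $\{e,f\}$ of $\gen{a}^\perp$ (possible since $\dim a^\perp=2$ is nondegenerate of some type; in $n=2$ this needs a brief check, but $a^\perp$ is a line and $e\in a^\perp$ is isotropic, so $a^\perp$ is plus-type and $\{e,f\}$ exists). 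Then $\gen{a,e,f}=V$ is square-type, but now there is no external point to pass through, so one must instead argue that $b\in\gen{e,f}$ — indeed $b\subseteq\gen{a,b}=\gen{a,e}\subseteq\gen{a}^\perp{}^{?}$ — and chase a common neighbour of $a$ and $b$ inside $V$ directly. The genuinely substantive case is when $\gen{a,b}$ is nonsquare-type: now $\gen{a,b}^\perp$ is a nonsquare-type point $c$, which is \emph{not} a point of $\Gamma$, so the distance-$2$ argument of the previous lemma fails and we are forced to look for a longer path.

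The heart of the argument will be to show that in the nonsquare-type case $a$ and $b$ are joined by a path of length $3$. The strategy is: inside the nonsquare-type line $\gen{a,b}$, find a third point or, better, use the plane $\gen{a,b}=V$ itself is not available (it is a line here since $n=2$). Let me restate: with $n=2$, $\gen{a,b}$ is a line and $V$ is a plane containing it; write $V=\gen{a,b}\perp\gen{c}$ with $c$ nonsquare-type. I would look for an intermediate square-type point $d$ with $\gen{a,d}$ square-type and $\gen{d,b}$ reachable: concretely, parametrise the $q+1$ points on a suitable plus-type line through $a$ and count square-type points using Lemma~\ref{lem:no tricky subspaces} and Corollary~\ref{cor:sum of squares}. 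The point $b$ lies in the nonsquare line $a^\perp$-related configuration; pick a hyperbolic pair and write both $a$ and $b$ in coordinates $a=\gen{(1,0,0)}$, $b=\gen{(\lambda,\mu,0)}$ with $\lambda^2+\mu^2$ a nonsquare (say with the orthonormal basis $e_1,e_2,e_3$), and search for $d=\gen{(x,y,z)}$ with $x^2+y^2+z^2$ a square, $\gen{a,d}$ square-type (i.e. $y^2+z^2$ a square), and $\gen{d,b}$ square-type. The last condition is a nondegeneracy-plus-square condition on a $2\times 2$ Gram determinant, hence a non-vanishing-and-square condition on an explicit quadratic form in $(x,y,z)$; combined with the two previous square conditions this is a system of "one point avoids a conic, two quantities are squares" type, and the bound $q\ge 7$ is exactly what makes the counting (each "square" condition cuts the relevant line's $q+1$ points down to roughly $(q\pm1)/2$, and the excluded degenerate loci are $O(1)$ in size) come out positive.

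The main obstacle — and the step I would spend the most care on — is this counting in the nonsquare-type case: one must verify that the three constraints (two "value is a nonzero square" conditions coming from the two plus-type-line requirements, plus the non-collinearity/nondegeneracy exclusions) are simultaneously satisfiable, and pin down that $q\ge7$ suffices while $q=3,5$ genuinely fail. I would organise this by fixing the middle point $d$ to lie on a cleverly chosen plus-type line already containing $a$ (so $\gen{a,d}$ is automatically square-type for $(q-1)/2$ choices of $d$ by Lemma~5.2), then among those $(q-1)/2$ candidates count how many have $\gen{b,d}$ square-type — this is one further square condition on a non-constant rational function of the line parameter, valid for at least $(q-1)/2 - O(1)$ of them, which is positive once $q\ge 7$. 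The degenerate sub-cases (where $\gen{a,d}$ or $\gen{b,d}$ happens to be degenerate, or $d$ coincides with a previously used point) are handled by Lemma~\ref{lem:no tricky subspaces}, which guarantees degenerate planes inside a square-type $V$ carry only square-type points, so no "hidden" obstruction arises. Finally, diameter exactly three (not two) follows because in the nonsquare-type case no common square-type neighbour exists: any point collinear with both $a$ and $b$ would lie in $\gen{a,b}^\perp=\gen{c}$, which is nonsquare-type, so distance $2$ is impossible for such a pair.
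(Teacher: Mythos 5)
Your Case 2 and Case 3 both fall short of a proof, and Case 3 contains an actual logical error.

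In Case 2 you correctly note that the $n\ge 3$ trick of jumping to a perpendicular point fails because there is no room in $V$, but you then stop at ``chase a common neighbour of $a$ and $b$ inside $V$ directly.'' You need to actually produce that common neighbour. The paper does so explicitly: take the hyperbolic basis $\{e,f\}$ of $\gen{a}^\perp$ with $e=\rad\gen{a,b}$, scale so that $(b,f)=1$, then set $x=\alpha e+\beta f$ with $\beta\neq 0$ chosen so that $1-\beta^2$ is a nonzero square and $\alpha=1/(2\beta)$. One then reads off that $\gen{a,x}$ and $\gen{b,x}$ are both square-type lines; the existence of such a $\beta$ is exactly where $q\geq 7$ enters. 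Without this construction your Case 2 is a gap, not a proof.

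In Case 3 the proposal is internally inconsistent: the middle of your argument searches for $d$ with \emph{both} $\gen{a,d}$ and $\gen{b,d}$ square-type (which would give distance $2$), while your last paragraph claims distance $2$ is impossible. Worse, that last claim rests on the false assertion that any point $d$ collinear with both $a$ and $b$ must lie in $\gen{a,b}^\perp$. Collinearity in $\Gamma$ means that the line $\gen{a,d}$ is square-type, not that $d\perp a$; perpendicularity was merely one convenient way to manufacture square-type lines in the $n\ge 3$ argument. So your ``diameter exactly three'' argument is simply wrong, and the counting argument you sketch is aimed at the wrong target (distance $2$) and is in any case not carried out. The paper's Case 3 instead reduces to Case 2: take an isotropic $s\in\gen{b}^\perp$, form the degenerate line $\gen{b,s}$, and observe that by Lemma \ref{lem:no tricky subspaces} every nonisotropic point on it is square-type; then $c=\gen{b,s}\cap\gen{a}^\perp$ is a square-type point which is at distance $1$ from $a$ (perpendicular, so diagonal square Gram matrix) and at distance $2$ from $b$ (since $\gen{b,c}=\gen{b,s}$ is a degenerate line, already handled). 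This cleanly gives distance $\le 3$. You should adopt this reduction rather than attempt a direct count, and you should delete the erroneous ``no common neighbour'' argument.
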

\begin{proof}
Let $a$ and $b$ be points. We again consider three separate cases.
\\
 Case 1: $\gen{a,b}$ is a square-type line. There is nothing to
show here. \\
 Case 2: $\gen{a,b}$ is a degenerate line. Let $\gen{e}=\rad{\gen{a,b}}$
and choose $f\in\gen{a}^{\perp}$ such that $\gen{e,f}$ is a plus-type
line. If necessary scale $f$ such that $(b,f)=1$. Notice $(b,f)\neq0$
else $\gen{a}^{\perp}=\gen{b}^{\perp}$ which would force $a$ and
$b$ to be the same point. Now scale $e$ so that $\{e,f\}$ is a
hyperbolic basis. Choose $\beta\neq0$ such that $1-\beta^{2}$ is
a square which we can do since $q\geq7$ and pick $\alpha=1/(2\beta)$.
Consider the point $x=\alpha e+\beta f$. We have that $(x,x)=2\alpha\beta=1$
and so $x$ is a square-type point. Also the Gram matrix for $\gen{a,x}$
is $\four{1}{0}{0}{1}$ with determinant one and the Gram matrix for
$\gen{b,x}$ is $\four{1}{\beta}{\beta}{1}$ with determinant $1-\beta^{2}$.
So both of these lines are of square-type. The conclusion thus follows.
\\
 Case 3: $\gen{a,b}$ is a nonsquare-type line. We note that $\gen{b}^{\perp}$
is a square-type line and hence in this case a plus-type line. Let
$s$ be an isotropic vector in $\gen{b}^{\perp}$. We consider the
space $\gen{b,s}$ which is degenerate but must intersect $\gen{a}^{\perp}$
since each is a two dimensional subspace of a three dimensional space.
Notice $s\not\in\gen{a}^{\perp}$ else $V$ would be degenerate. There
are only $q$ other points on $\gen{b,s}$ all of which are square-type.
Thus $\gen{b,s}\cap\gen{a}^{\perp}$ is a square-type point call it
$c$. We now have $\gen{b,c}$ is a degenerate two dimensional subspace
which means $b$ can be connected to $c$ with distance two and since
$c$ is in $\gen{a}^{\perp}$ it is connected to $a$ with distance
one. Thus the diameter is three. \\

\end{proof}
Before proceeding to simple connectedness we verify the desired group
acts flag transitively on the geometry $\Gamma$. Fix $H:=O_{n}^{+}(q)$.
\begin{lem}
$H$ acts flag transitively on $\Gamma$.\end{lem}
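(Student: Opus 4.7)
The plan is to reduce flag transitivity to the existence of adapted orthonormal bases and then invoke the standard change-of-basis argument (essentially Witt's extension theorem in disguise).

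First I would observe that a maximal flag of $\Gamma$ is a chain $W_1 < W_2 < \cdots < W_n$ in which each $W_i$ is nondegenerate of square-type and $\dim W_i = i$. Given such a flag, I claim one can build an orthonormal basis $e_1,\ldots,e_{n+1}$ of $V$ such that $W_i = \langle e_1,\ldots,e_i\rangle$ for every $i$. This is done inductively: since $W_{i-1}\subset W_i$ is a nondegenerate pair, we have the orthogonal decomposition $W_i = W_{i-1}\perp U_i$ with $U_i := W_i\cap W_{i-1}^{\perp}$ one-dimensional. Multiplicativity of determinants gives $\det(W_i)=\det(W_{i-1})\det(U_i)$, and because both $W_i$ and $W_{i-1}$ are square-type, so is $U_i$; consequently $U_i$ contains a unit vector $e_i$ (using that $-1$ is a square, so any one-dimensional square-type space has a vector of norm $1$). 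The same argument applied to $W_n\subset V$ produces $e_{n+1}$, since $V$ itself is of square-type (being of $+$-type with $-1$ a square, its discriminant is a square).

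Second, given two maximal flags $W_1<\cdots<W_n$ and $W_1'<\cdots<W_n'$, I would carry out the construction above to obtain adapted orthonormal bases $(e_i)$ and $(e_i')$ of $V$. The linear map defined by $e_i\mapsto e_i'$ is an isometry (it sends an orthonormal basis to an orthonormal basis), hence belongs to $H=O(V)$, and by construction it carries the first flag onto the second. This establishes flag-transitivity. I would also note that the stabilizer of a maximal flag contains the sign changes on each $U_i$ and on $W_n^{\perp}$, so it meets every coset of $SO(V)$ in $O(V)$; in particular the $SO(V)$-variant of the lemma, needed later for Theorem~\ref{thm2}, follows at no extra cost.

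The only technical point worth highlighting is the inductive step producing $e_i$: one must verify that square-type propagates from $W_i$ and $W_{i-1}$ to the one-dimensional orthogonal complement $U_i$, which is just the determinantal identity above. There is no serious obstacle — the whole argument is a routine Witt-style extension once square-type subspaces have been shown to admit orthonormal bases, and that fact is immediate from the hypotheses that $V$ is of $+$-type and that $-1$ is a square in $\Fq$.
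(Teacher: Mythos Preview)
Your argument is correct and complete. It differs in presentation from the paper's proof, which proceeds by direct induction on Witt's extension lemma: given that corresponding members $V_i$, $W_i$ of two flags are isometric, the paper fixes an isometry $\psi\colon V_{i+1}\to W_{i+1}$ and uses Witt to adjust it so that it extends a prescribed isometry $V_i\to W_i$, then iterates up the flag. You instead construct an adapted orthonormal basis for each maximal flag (using the square-type hypothesis to see that each successive one-dimensional orthogonal complement $U_i$ is itself square-type and hence contains a unit vector) and then transport one basis to the other by a single change-of-basis map. Both routes are standard and of comparable difficulty; yours is more explicit and has the pleasant bonus of immediately yielding $SO(V)$-transitivity, which is what Theorem~\ref{thm2} actually requires, whereas the paper's Witt-based argument is agnostic to the square-type structure and would apply verbatim to any chain of pairwise isometric nondegenerate subspaces. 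One minor remark: the appeal to ``$-1$ is a square'' when producing a unit vector in the one-dimensional space $U_i$ is not needed at that step --- a one-dimensional square-type space contains a norm-$1$ vector simply by scaling --- though the hypothesis is of course relevant elsewhere, in particular to ensure that $V$ itself is square-type.
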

\begin{proof}
Let $V_{1}\subset V_{2}$ and $W_{1}\subset W_{2}$ be subspaces such
that $V_{i}\iso W_{i}$. Fix $\psi:V_{2}\to W_{2}$ an isometry and
let $\varphi:V_{1}\to W_{1}$ be an arbitrary isometry. Notice that
$\psi(V_{1})\subset W_{2}$ and so we may apply Witt's Lemma and obtain
an isometry $\overline{\varphi}:W_{2}\to W_{2}$ such that $\overline{\varphi}|\psi(V_{1})=\varphi\circ\psi^{-1}$.
Finally we claim that the map $\overline{\varphi}\circ\psi:V_{2}\to W_{2}$
is the desired extension of $\varphi$. If $v\in V_{1}$ then $\overline{\varphi}(\psi(v))=\varphi(\psi^{-1}(\psi(v)))=\varphi(v)$
and so $\overline{\varphi}\circ\psi|V_{1}=\varphi$. We can now move
from one step to the next in any flag and thus flag transitivity follows
by induction. 
\end{proof}

\section{Simple Connectedness (case $n\ge4$)}

\label{nge4} We assume we are in the case when $n\ge4$. The following
are rather standard results about geometries.
\begin{defn}
A cycle is geometric if it is fully contained in $\{a\} \cup \res{a}$
for some $a\in\Gamma$.  \end{defn}
\begin{lem}
Every geometric cycle is homotopic to the trivial cycle. \end{lem}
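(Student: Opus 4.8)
The plan is to reduce everything to the definition of homotopy equivalence, which allows contraction of triangles and returns. Let $\gamma = x_0, x_1, \ldots, x_k$ with $x_0 = x_k$ be a geometric cycle, so that every $x_i$ lies in $\{a\} \cup \res{a}$ for some fixed $a \in \Gamma$. First I would dispose of the trivial situations: if some $x_i = a$, then its neighbours $x_{i-1}$ and $x_{i+1}$ both lie in $\res{a}$, hence are incident to $a$; and since they are incident to each other in the path, the triple $x_{i-1}, a, x_{i+1}$ is either a triangle (if the three are pairwise incident and distinct) which can be removed, or forces $x_{i-1} = x_{i+1}$ giving a return which can be removed. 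So after finitely many such moves we may assume $a$ itself does not occur among the $x_i$, i.e. the whole cycle lies in $\res{a}$.

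Now the key point is that $\res{a}$ is a geometry on which $a$ is incident to \emph{every} object, so $a$ can be used as an apex. Concretely, for each consecutive pair $x_i, x_{i+1}$ we have $x_i \inc x_{i+1}$, and also $a \inc x_i$ and $a \inc x_{i+1}$; thus $x_i, x_{i+1}, a$ is a triangle (a cycle of length $3$), unless some coincidence degenerates it to a return. Using these triangles we may rewrite the edge $x_i \to x_{i+1}$ as the two-edge path $x_i \to a \to x_{i+1}$ up to homotopy. Performing this at every edge and then cancelling the resulting returns $a \to x_{i+1} \to a$ (they are returns because consecutive such insertions share the vertex $a$), the cycle $\gamma$ is homotopic to a cycle supported on $\{a, x_0\}$, namely $x_0 \to a \to x_0$, which is a return and hence homotopic to the trivial cycle. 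One must be a little careful to treat the place where $x_0 = x_k$ consistently, but the argument is purely local and the bookkeeping is routine.

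I would organize the write-up as: (1) reduce to the case $a \notin \{x_0, \ldots, x_k\}$ by removing occurrences of $a$ as above; (2) insert the apex $a$ between each consecutive pair, invoking that $a$ is incident to all objects of $\res{a}$ so each inserted configuration is a genuine triangle or return; (3) cancel the returns to contract the cycle down to a length-$2$ return based at $x_0$, which is homotopically trivial by definition. Throughout, one should note that in a \emph{geometry} (transversal pre-geometry) incident distinct objects have distinct types, so the apex $a$ is never equal to any $x_i$ once we have arranged $a \notin \{x_i\}$, and the triangles are honest $3$-cycles.

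The main obstacle — really the only subtlety — is the edge-case handling when the path revisits $a$, or when an inserted triangle degenerates because $x_i = x_{i+1}$ is impossible (paths have no repetitions) but $x_i$ and $a$ might coincide before the reduction step. Handling these degeneracies cleanly, and making sure the insertion/cancellation at the basepoint $x_0 = x_k$ does not introduce a spurious non-contractible loop, is where the care is needed; the topological content is otherwise immediate from the fact that a geometric cycle lies in a "cone" with apex $a$.
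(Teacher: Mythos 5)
Your argument is exactly the paper's one-line proof, spelled out: insert returns to the apex $a$ between consecutive vertices of the cycle and then cancel the resulting triangles (plus the routine handling of the degenerate cases where $a$ already appears on the cycle or a triangle collapses to a return). Same approach, just with more bookkeeping made explicit.
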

\begin{proof}
If the cycle is contained in $\{a\}\cup\res{a}$ then the cycle is
homotopic to the constant cycle by first inserting returns to $a$
and then removing triangles. \end{proof}
\begin{cor}
If two cycles are obtained by inserting or deleting a geometric cycle
then they are homotopic. 
\end{cor}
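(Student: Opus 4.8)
The plan is to derive the corollary in a single step from the preceding Lemma, once the statement is unwound and the routine fact that $\simeq$ respects concatenation is recorded. To say that $\delta$ is obtained from $\gamma$ by \emph{inserting} a geometric cycle means that there is a vertex $x$ occurring in $\gamma$, a factorization $\gamma=\alpha\beta$ with $\alpha$ the initial segment of $\gamma$ ending at $x$ and $\beta$ the terminal segment beginning at $x$, and a geometric cycle $c$ based at $x$ (so that $c$ lies inside $\{b\}\cup\res{b}$ for some element $b$), such that $\delta=\alpha\,c\,\beta$. Deleting a geometric cycle is precisely the inverse operation, with the roles of $\gamma$ and $\delta$ interchanged, so it suffices to treat insertion.

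First I would apply the previous Lemma to $c$: being geometric and based at $x$, it is homotopic to the constant cycle at $x$ via a finite sequence of elementary moves, each of which inserts or removes a return (a length-$2$ cycle) or a triangle (a length-$3$ cycle). Each such move alters only two or three consecutive vertices of the word, so it is still a legal elementary move when $c$ sits as the middle block of $\alpha c\beta$; applying the same sequence of moves inside that block carries $\alpha c\beta$ to $\alpha\beta$. Hence $\delta=\alpha c\beta\simeq\alpha\beta=\gamma$, which is the assertion.

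I do not expect a genuine obstacle here. The one point worth a sentence is the congruence property just used, namely that $c\simeq c'$ as cycles based at $x$ forces $\alpha c\beta\simeq\alpha c'\beta$; this is immediate from the locality of the defining homotopy moves. The degenerate situations — $\alpha$ or $\beta$ empty, or $c$ already trivial — are all instantaneous. In fact the cleanest packaging is to restate the corollary as ``$\gamma\simeq\delta$ whenever $\gamma=\alpha\beta$ and $\delta=\alpha c\beta$ with $c$ a geometric cycle based at the common endpoint of $\alpha$ and $\beta$'', after which the proof is literally one line, citing the Lemma together with the congruence of $\simeq$.
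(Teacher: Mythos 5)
Your argument is correct and is exactly the intended (and in the paper, tacit) deduction: reduce to the insertion case, apply the preceding Lemma to null-homotope the inserted geometric cycle, and use that the elementary homotopy moves are local so $\simeq$ is a congruence for concatenation. Nothing more is needed.
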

At this point we fix a point $x$ and let it be our base point i.e.,
we will consider $\pi_{1}(\Gamma)=\pi_{1}(\Gamma,x)$. Let $\Sigma$
be the subgraph in the incidence graph of $\Gamma$ that is induced
by the points and lines. For an element $a\in\Gamma$ that is neither
a point nor a line, define $\Sigma_{a}=\Sigma\cap\res{a}$. That is
$\Sigma_{a}$ consists of all the points and lines incident to $a$.
\begin{lem}
Every cycle starting at $x$ is homotopic to a cycle that is fully
contained in $\Sigma$. \end{lem}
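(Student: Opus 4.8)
The plan is to show that any cycle $\gamma$ based at $x$ in the incidence graph of $\Gamma$ can be replaced, up to homotopy, by one whose vertices are all points and lines. The cycle $\gamma$ alternates through objects of various types; the only objects that can appear besides points and lines are subspaces $a$ of dimension $\ge 3$. I would argue that each such $a$, together with its two neighbours in $\gamma$, can be ``bridged'' by a detour through $\Sigma_a$, i.e.\ through points and lines incident to $a$, at the cost of inserting a geometric cycle. Since geometric cycles are homotopically trivial (by the two lemmas just proved), this operation does not change the homotopy class, and iterating it on all high-dimensional objects of $\gamma$ produces a homotopic cycle lying entirely in $\Sigma$.

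The key steps, in order, are as follows. First, observe that a high-dimensional object $a$ occurring in $\gamma$ has two neighbours $u,v$ in $\gamma$ (its predecessor and successor), each incident to $a$; by the rank-$\ge 4$ hypothesis the residue $\res a$ has rank $\ge 2$, so $\Sigma_a$ (the points and lines of $\res a$) is nonempty and, being the point-line structure of a nondegenerate orthogonal geometry of dimension $\ge 3$, is connected. Second, I replace the segment $u, a, v$ of $\gamma$ by the segment $u, a, p, a, q, a, v$ for suitably chosen points/lines $p,q$ incident to $a$ lying ``close to'' $u$ and $v$ respectively in $\Sigma_a$ — more carefully, one wants $u$ incident to $p$ and $v$ incident to $q$ whenever $u,v$ are themselves not already points/lines; since any flag extends to a chamber, such $p,q$ exist. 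The inserted detour $a,p,a,q,a$ is a cycle inside $\{a\}\cup\res a$, hence geometric, hence removable up to homotopy; what remains is a path through $u$, then down into $\Sigma_a$ and back up to $v$, never revisiting $a$. Third, one checks that this local surgery strictly decreases the number of occurrences of objects of type $\ge 3$ in the cycle (replacing one such occurrence of $a$ by occurrences of $p,q$ which have strictly smaller type, or are themselves points/lines), so the process terminates. Fourth, after all high-dimensional objects have been eliminated, every vertex of the resulting cycle is a point or a line, so the cycle lies in $\Sigma$; by construction it is homotopic to $\gamma$.

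The main obstacle is the bookkeeping in the second step: when a neighbour $u$ of $a$ in $\gamma$ is itself a high-dimensional object (not yet a point or line), one cannot directly connect $u$ to a point of $\Sigma_a$ by an edge, and one must instead route through the flag $\{u,a\}$, using that it extends to a chamber to find a point $p$ incident to \emph{both} $u$ and $a$; then the bridging detour is $u, p, a, \dots, a, q, v$ with the pieces $u,p,a$ and $a,q,v$ being parts of geometric cycles based at $u$, $a$, $v$ respectively. Making this simultaneously consistent for the predecessor and successor of $a$, and verifying that one really is inserting/deleting \emph{geometric} cycles (so that the earlier corollary applies) rather than arbitrary null-homotopic moves, is the delicate point; the rest is a straightforward induction on the multiset of types appearing in the cycle, ordered so that larger types are eliminated first.
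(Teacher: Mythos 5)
The paper's own ``proof'' of this lemma is just a citation to Buekenhout~\cite{Bue95}, so you are reconstructing the standard argument, and your first, third, and fourth steps — locate objects of type $\geq 3$, reroute through $\Sigma_a$ using connectedness of $\Sigma_a$ and the null-homotopy of geometric cycles, and induct on the number of high-type vertices — are exactly the right skeleton. Note, however, that connectedness of $\Sigma_a$ when $\dim a=3$ rests on Lemma~\ref{lem:connect} and so quietly uses $q\geq 7$, a hypothesis suppressed in the lemma as stated but present in the theorems that invoke it.

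There is a genuine gap in your second step: the surgery you actually write never produces a path that bypasses $a$. Replacing $u,a,v$ by $u,a,p,a,q,a,v$ and then ``removing the inserted detour $a,p,a,q,a$'' just undoes the insertion and returns $u,a,v$; and in your closing paragraph the purported detour $u,p,a,\dots,a,q,v$ still passes through $a$. Nowhere do you invoke the connectedness of $\Sigma_a$ inside the surgery, even though you correctly flagged it in step one. The missing move is the following. Choose $p$ of type $1$ or $2$ incident to both $u$ and $a$, and $q$ incident to both $v$ and $a$ (possible since $\{u,a\}$ and $\{a,v\}$ extend to chambers; when $u$, resp.\ $v$, is already a point or line take $p=u$, resp.\ $q=v$). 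Pick a path $\delta$ from $p$ to $q$ inside $\Sigma_a$ — this is where connectedness of $\Sigma_a$ enters. Then $u,a\simeq u,p,a$ and $a,v\simeq a,q,v$ by inserting a triangle and removing a return (valid because $p\in\res{u}\cap\res{a}$, $q\in\res{v}\cap\res{a}$), and the closed walk $p,a,q,\bar\delta,p$ lies entirely in $\{a\}\cup\res{a}$, hence is geometric, so $p,a,q\simeq p,\delta,q$. Altogether $u,a,v\simeq u,p,\delta,q,v$, a segment which avoids $a$ and introduces only points and lines; this is the step that makes your inductive measure strictly decrease. With that one move supplied, the rest of your proposal is correct.
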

\begin{proof}
Standard, see \cite{Bue95}. \end{proof}
\begin{lem}
\label{lem:three spaces into 4}Any $3$-dimensional space that contains
a line of the geometry can be embedded in a proper subspace 
of $V$ of square-type. \end{lem}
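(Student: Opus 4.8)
The plan is to peel the square-type line off $W$ and then build a $4$-dimensional square-type subspace around $W$. Let $\ell$ be a square-type line (i.e.\ a line of $\Gamma$) contained in the given $3$-dimensional space $W$. Since $\ell$ is nondegenerate, $V=\ell\perp\ell^{\perp}$, and for $w\in W$, writing $w=a+b$ with $a\in\ell$ and $b\in\ell^{\perp}$, we get $b=w-a\in W$ because $\ell\subseteq W$; hence $W=\ell\perp W_{0}$ where $W_{0}:=W\cap\ell^{\perp}$, and since $\ell\cap\ell^{\perp}=0$, comparing dimensions yields $\dim W_{0}=1$. Note that $\ell^{\perp}$ is nondegenerate of dimension $n-1\ge 3$ and contains $W_{0}$.

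The key step is to enlarge $W_{0}$ to a square-type plane $U$ with $W_{0}\subseteq U\subseteq\ell^{\perp}$, and here I would split into two cases according to whether $W_{0}$ is a nondegenerate point or an isotropic one. If $W_{0}=\gen{p}$ with $(p,p)\ne 0$, then $\gen{p}^{\perp}\cap\ell^{\perp}$ is nondegenerate of dimension $n-2\ge 2$, so it contains a nondegenerate $2$-subspace; by the lemma counting square-type points on plus- and minus-type lines, that $2$-subspace contains a point $q$ with $(q,q)$ in the same square class as $(p,p)$, and then $U:=\gen{p,q}$ is nondegenerate with square Gram determinant, hence of square-type. If instead $W_{0}=\gen{e}$ with $e$ isotropic, then, $\ell^{\perp}$ being nondegenerate, we extend $e$ to a hyperbolic pair $\{e,f\}\subseteq\ell^{\perp}$ and take $U:=\gen{e,f}$; a hyperbolic plane is plus-type, which in the case at hand coincides with square-type.

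Finally, set $W':=\ell\perp U$. As an orthogonal sum of nondegenerate spaces it is nondegenerate, and its Gram determinant is the product of the two square determinants of $\ell$ and $U$, hence a square, so $W'$ is of square-type; moreover $\dim W'=4$ and $W=\ell\perp W_{0}\subseteq\ell\perp U=W'$. Since $n\ge 4$ we have $\dim W'=4\le n<n+1=\dim V$, so $W'$ is proper, which completes the argument. I do not anticipate a real obstacle; the only delicate point is the dimension bookkeeping, and it is precisely the hypothesis $n\ge 4$ that simultaneously makes $\ell^{\perp}$ and $\gen{p}^{\perp}\cap\ell^{\perp}$ large enough to supply the extra vector and keeps the resulting $4$-dimensional space $W'$ proper in $V$.
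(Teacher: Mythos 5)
Your proof is correct and follows essentially the same strategy as the paper's: isolate a square-type line $\ell$ inside $W$ (the paper calls it $L$), observe that the remaining one-dimensional piece sits in $\ell^{\perp}$, and adjoin a single vector so that the resulting four-dimensional Gram determinant is a square. The only difference is organizational --- the paper cases on the type of $W$ (square-type, nonsquare-type, degenerate; with the first case requiring no enlargement at all), whereas you case on whether $W\cap\ell^{\perp}$ is anisotropic or isotropic, which merges the paper's two nondegenerate cases into one uniform square-class-matching step and always produces a four-dimensional overspace.
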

\begin{proof}
Let $W$ be the three dimensional subspace. If $W$ is of nondegenerate
plus-type there is nothing to prove. If $W$ is of nonsquare-type
then $W^{\perp}$ is a 2 dimensional space of nonsquare-type and so
it will contain a subspace $\gen{u}$ of nonsquare-type. The space
$\gen{W,u}$ is then of square-type.

Finally if $W$ is degenerate then consider $\gen{e}=\rad{W}$ (note
that the radical is $1$-dimensional by hypothesis.) Consider $L\leq W$
a line of the geometry and $f\in L^{\perp}\setminus e^{\perp}$. Such
an element exists because otherwise $V\subseteq e^{\perp}$. The space
$\gen{W,f}$ is then the orthogonal sum of the two square-type spaces
$L$ and $\gen{e,f}$ and so it is of square-type. \end{proof}
\begin{cor}
For $n\geq4$ every triangle is geometric. \end{cor}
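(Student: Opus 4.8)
A triangle here is a closed path of length six in the incidence graph, through three points $p_1,p_2,p_3$ and three lines $\ell_1,\ell_2,\ell_3$ with $p_i,p_{i+1}\subseteq\ell_i$ (indices modulo $3$). Since two distinct points span a unique $2$-dimensional subspace, $\ell_i=\gen{p_i,p_{i+1}}$, and each $\ell_i$ is nondegenerate of square-type because it is a line of $\Gamma$. The strategy is to produce a single object $a\in\Gamma$ incident to all six of $p_1,p_2,p_3,\ell_1,\ell_2,\ell_3$; the triangle is then contained in $\{a\}\cup\res{a}$ and hence geometric by definition.

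I would take $W=\gen{p_1,p_2,p_3}=\gen{\ell_1,\ell_2,\ell_3}$. Since each $\ell_i\subseteq W$ is $2$-dimensional, $2\le\dim W\le 3$. If $\dim W=2$ then $W=\ell_1$ is already an object of $\Gamma$ and one takes $a=W$. So assume $\dim W=3$; this is the only substantial case. Then $W$ is a $3$-dimensional subspace of $V$ containing the line $\ell_1$ of the geometry, so Lemma \ref{lem:three spaces into 4} applies and produces a \emph{proper} square-type subspace $U$ of $V$ with $W\subseteq U$; in particular $U\in\Gamma$.

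To finish, observe that each of the six objects $p_1,p_2,p_3,\ell_1,\ell_2,\ell_3$ is contained in $W\subseteq U$ and is in fact properly contained in $U$ (its dimension is at most $2$, whereas $\dim U\ge 3$), so each lies in $\res{U}$. Thus the whole triangle lies in $\{U\}\cup\res{U}$ and is geometric.

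The only real input is Lemma \ref{lem:three spaces into 4}, and that is exactly where the hypothesis $n\ge4$ is used: it is what guarantees that the enveloping square-type subspace $U$ can be taken proper in $V$ (when $n=3$, a $3$-dimensional subspace of nonsquare or degenerate type admits no square-type proper overspace, so the statement genuinely fails there). Apart from invoking that lemma, the argument is just bookkeeping with dimensions and incidences, so I foresee no obstacle.
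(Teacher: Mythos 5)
Your proof is correct and takes essentially the same route as the paper: set $W=\gen{a,b,c}$ and invoke Lemma~\ref{lem:three spaces into 4} to obtain a proper square-type overspace of $W$, which is then incident to all six objects of the hexagonal cycle. You also explicitly dispose of the degenerate case $\dim W=2$, which the paper's one-line proof leaves implicit (in that case the three lines coincide, so the cycle has returns and is trivially null-homotopic).
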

\begin{proof}
Let $\gamma=abca$ and $W=\gen{a,b,c}$. Then $W$ is a three dimensional
space as in \ref{lem:three spaces into 4} and so the triangle is
geometric.\end{proof}
\begin{lem}
\label{lem:connected subspaces}Let $W$ be a 3-dimensional subspace
of $V$ containing at least one line of the geometry and $q\geq7$.
Then for all square-type points $a,b$ in $W$ there is a path joining
then entirely in $W$. \end{lem}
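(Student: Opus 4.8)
The plan is to branch on the isometry type of $W$ itself. Since $W$ is $3$-dimensional and contains a nondegenerate line, its radical has dimension $0$ or $1$ (a radical of dimension $2$ or $3$ would make every $2$-subspace of $W$ degenerate), so exactly three cases occur: $W$ is nondegenerate of square-type, $W$ is nondegenerate of nonsquare-type, or $W$ is degenerate with a $1$-dimensional radical. I would treat these in turn; the first is just a citation, and the other two are short arguments using the preliminary lemmas together with the identification of subspace type with the class of the Gram determinant.

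If $W$ is nondegenerate of square-type then, since square-type and plus-type coincide here, the geometry induced on $W$ (square-type points and square-type lines of $W$) is precisely the $n=2$ geometry of $W$, so the claim is exactly Lemma~\ref{lem:connect} applied verbatim inside $W$, and the paths it produces lie in $W$. This is the only place the bound $q\ge7$ is genuinely needed.

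If $W$ is nondegenerate of nonsquare-type, let $a,b$ be square-type points and consider $\gen{a,b}$. By Lemma~\ref{lem:no tricky subspaces} every degenerate $2$-subspace of $W$ carries only nonsquare-type points, so $\gen{a,b}$ is not degenerate; hence it is a line of $W$, of square- or nonsquare-type. If it is square-type, $a$ and $b$ are already collinear. If it is nonsquare-type, write $W=\gen{a,b}\perp\gen{c}$ with $\gen{c}=\gen{a,b}^{\perp}\cap W$ (a nondegenerate $1$-space, since $\gen{a,b}$ and $W$ are nondegenerate); comparing determinants, $(c,c)$ equals nonsquare over nonsquare, hence is a nonzero square, so $\gen{c}$ is a square-type point, necessarily distinct from $\gen{a}$ and $\gen{b}$. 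Then $\gen{a,c}=\gen{a}\perp\gen{c}$ and $\gen{c,b}=\gen{c}\perp\gen{b}$ are square-type lines, giving the path $a,\gen{a,c},c,\gen{c,b},b$ inside $W$.

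If $W$ has $1$-dimensional radical $\gen{e}$, then by the lemma stating that in a $3$-space with $1$-dimensional radical all $2$-subspaces not containing the radical are nondegenerate of the same type, and using the hypothesis that $W$ contains a line $L$ of the geometry, every $2$-subspace of $W$ not containing $e$ is a square-type line. So for square-type points $a,b$: if $e\notin\gen{a,b}$ then $\gen{a,b}$ is itself a square-type line and $a,b$ are collinear; if $e\in\gen{a,b}$, choose a square-type point $c\in W\setminus\gen{a,b}$ — such a point exists because $L\ne\gen{a,b}$ (as $e\notin L$), so $L\cap\gen{a,b}$ is a single point while $L$ carries $(q-1)/2\ge 3$ square-type points. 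Since $e$ is isotropic it does not lie in the square-type point $\gen{a}$, and $\gen{a,b}\cap\gen{a,c}=\gen{a}$ once $c\notin\gen{a,b}$, so $e\notin\gen{a,c}$ and likewise $e\notin\gen{c,b}$; thus $\gen{a,c}$ and $\gen{c,b}$ are square-type lines and $a,\gen{a,c},c,\gen{c,b},b$ is the required path. The only mildly delicate points are the case analysis on the radical of $W$ and, in the last case, the verification that $e\notin\gen{a,c}$; I do not expect any real obstacle, the square-type case being entirely a reference to Lemma~\ref{lem:connect}.
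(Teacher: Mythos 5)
Your proof is correct and follows essentially the same case split as the paper's: cite Lemma~\ref{lem:connect} when $W$ is square-type, use Lemma~\ref{lem:no tricky subspaces} plus the orthogonal complement point when $W$ is nonsquare-type, and in the degenerate case observe that $a,b$ are collinear unless the radical lies on $\gen{a,b}$, in which case any square-type point off $\gen{a,b}$ joins them. You spell out a couple of steps the paper leaves tacit (the Gram-determinant check that $\gen{a,b}^\perp\cap W$ is square-type, and the existence of a square-type point off $\gen{a,b}$ via $L$), but the route is the same.
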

\begin{proof}
First I claim that if $W$ is a nondegenerate space then it is connected.
If $W$ is square-type then the we have already done this in Lemma
\ref{lem:connect}. If $W$ is nonsquare-type then the proof is similar
to that of Lemma \ref{lem:connect} but a little easier. Note first
that by Lemma \ref{lem:no tricky subspaces} the line $\gen{a,b}$
cannot be degenerate so we have the following 2 cases. Case 1: $\gen{a,b}$
is a square-type line. This means $a$ and $b$ can be connected.
\\
 Case 2: $\gen{a,b}$ is a nonsquare-type line. This means that
there is a square-type point in $\gen{a,b}^{\perp}$. Call this point
$x$ and so $\gen{a,x}$ and $\gen{b,x}$ are square-type lines and
so $a$ and $b$ are connected. \\

Now suppose $\dim{(\rad{W})}=1$. Then since $W$ contains a line
of the geometry, any two dimensional subspace of $W$ not containing
the radical is a line of the geometry. If $a$ and $b$ are not collinear
it means that the radical of $W$ is on $\gen{a,b}$. It then means
that $a$ and $b$ are collinear to any point not on $\gen{a,b}$. \end{proof}
\begin{lem}
For $n\geq4$ every $4$-cycle is decomposable. \end{lem}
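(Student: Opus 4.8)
Here ``decomposable'' should mean ``homotopic to a concatenation of triangles and returns'', so that, combined with the facts that for $n\ge 4$ every triangle is geometric and every geometric cycle is null-homotopic, it also follows that every $4$-cycle is null-homotopic. Write the cycle as $\gamma=abcda$, with consecutive points spanning square-type lines; I may assume $a\ne c$ and $b\ne d$, since otherwise $\gamma$ is visibly a product of two returns.

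The engine of the argument is this: \emph{if $e$ is a square-type point collinear with each of $a,b,c,d$, then $\gamma$ is decomposable.} Indeed, each of $\{a,b,e\}$, $\{b,c,e\}$, $\{c,d,e\}$, $\{d,a,e\}$ is then a triangle, so substituting the relations $ab\simeq ae\cdot eb$, $bc\simeq be\cdot ec$, $cd\simeq ce\cdot ed$, $da\simeq de\cdot ea$ into $\gamma=ab\cdot bc\cdot cd\cdot da$ and cancelling the returns $eb\cdot be$, $ec\cdot ce$, $ed\cdot de$, $ae\cdot ea$ exhibits $\gamma$ as the product of triangles $(aeb)(bec)(ced)(dea)$. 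So I would reduce the whole statement to producing such a common neighbour $e$, with the escape hatch that in degenerate configurations it may be cleaner to show $\gamma$ is geometric outright.

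First I would clear away the cheap cases. If one of the diagonals $\langle a,c\rangle$, $\langle b,d\rangle$ is a square-type line, then rewriting a sub-path through the corresponding null triangle shows $\gamma$ is homotopic to a single triangle, hence done. Otherwise let $U=\langle a,b,c,d\rangle$, of dimension at most $4$; if $\dim U\le 3$ then $U$ contains the square-type line $\langle a,b\rangle$, so by Lemma~\ref{lem:three spaces into 4} it lies in a proper square-type subspace $U'$, and since every vertex and every edge-line of $\gamma$ lies in $U\subseteq U'$, the cycle is contained in $\{U'\}\cup\res{U'}$ and is geometric. More generally, whenever $U$ — possibly after adjoining a hyperbolic or a nonsquare perpendicular direction to correct its type — lands inside a proper square-type subspace, $\gamma$ is geometric; this covers every $n$ large enough relative to $\dim U$.

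In the generic remaining case I would take $e$ to be a square-type point with $e\perp U$: then each $\langle e,x\rangle=\langle e\rangle\perp\langle x\rangle$ ($x\in\{a,b,c,d\}$) has square discriminant, so $e$ is collinear with all four and the engine finishes the job; such an $e$ exists as soon as the anisotropic part of $U^{\perp}$ carries a square-type point, which is automatic once $\dim U^{\perp}=n+1-\dim U$ is big enough. The hard part will be the tight low-dimensional configurations — essentially $n=4$, together with a couple of $n=5$ cases — in which $\langle a,b,c,d\rangle$ is a $4$-dimensional subspace that is degenerate or of nonsquare type, so that neither the ``geometric'' reduction nor a perpendicular common neighbour is available. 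There one must instead find $e$ collinear with $a,b,c,d$ directly, which turns into a counting problem over $\Fq$; I would handle it using Corollary~\ref{cor:sum of squares} (and, for the sharpest estimate, Lemma~\ref{lem:Joe's lemma}), with Lemma~\ref{lem:no tricky subspaces} invoked to exclude the awkward degenerate planes. I expect this last case to be the only genuine obstacle.
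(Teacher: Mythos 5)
Your plan is genuinely different from the paper's, and it stops short of a proof precisely at the point where the real work lies. The paper does \emph{not} look for a single common neighbour of all four vertices. Instead it first ``perpendicularizes the diagonal'': replacing $b$ by a square-type point $b_1\in\gen{a,b,c}^{\perp}$ and $d$ by $d_1\in\gen{a,d,c}^{\perp}$ (such points exist by the arguments around Lemma~\ref{lem:no tricky subspaces}), one decomposes $\gamma$ into four triangles plus a new $4$-gon $ab_1cd_1a$ with $b_1,d_1\in\gen{a,c}^{\perp}$. Once $\gen{a,c}\perp\gen{b,d}$, the residue of $\gen{a,c}^{\perp}$ takes over: if $\gen{a,c}$ is square-type the gon collapses to two triangles; if it is nonsquare or degenerate, $b$ and $d$ are joined by a \emph{path} inside $U=\gen{a,c}^{\perp}$ (Lemma~\ref{lem:connected subspaces}), and since every point of $U$ is automatically collinear with both $a$ and $c$, each step of that path produces two triangles. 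Note that a path of length $\ge 2$ suffices here; a single common neighbour is not needed.

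This is exactly what your sketch is missing. You correctly isolate the hard configurations --- for $n=4$, when $U=\gen{a,b,c,d}$ is $4$-dimensional and nonsquare-type or degenerate, so that $U^{\perp}$ is a $1$-dimensional nonsquare or isotropic line and carries no square-type point, and $U$ cannot be enlarged to a proper square-type subspace. But you then say only that producing $e$ collinear with all four ``turns into a counting problem over $\Fq$'' which you ``would handle'' with Corollary~\ref{cor:sum of squares} and Lemma~\ref{lem:Joe's lemma}, and that you ``expect this last case to be the only genuine obstacle.'' That is an acknowledged gap, and it is the whole content of the lemma: in those configurations it is not clear (and you have not shown) that a single square-type point collinear with all of $a,b,c,d$ exists, and if you attempt the count you will be redoing, less efficiently, the analysis the paper avoids via the perpendicularization step. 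The paper's reduction is the key idea to adopt: it trades ``one $e$ collinear with all four vertices'' for ``$b$ and $d$ connected inside $\gen{a,c}^{\perp}$,'' which is a strictly weaker and already-established condition (Lemma~\ref{lem:connected subspaces}), and it disposes of the would-be bad case (no lines in $\gen{a,c}^{\perp}$) by a short contradiction using Lemma~\ref{lem:no tricky subspaces}. As written, your argument is correct for the easy cases but does not prove the statement.
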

\begin{proof}
Let $\gamma=abcda$. \\
 Claim: Without loss of generality we may assume that $b,d\in\gen{a,c}^{\perp}$.
\\
 Proof of Claim: Consider the spaces $W=\gen{a,b,c}^{\perp}$ and
$W'=\gen{a,d,c}^{\perp}$. They are codimension one subspaces of the
square-type nondegenerate subspaces $\gen{a,b}^{\perp}$ and respectively
$\gen{a,d}^{\perp}$. The later have dimension at least 3 so by Lemma
\ref{lem:no tricky subspaces} there exists $b_{1}\in W$ and $d_{1}\in W'$,
points of the geometry. Decompose $\gamma$ as a sum of the 4 triangles
$abb_{1}a,bb_{1}cb,cd_{1}dc,dd_{1}ad$ and the $4$-gon $ab_{1}cd_{1}a$.
Thus the claim is true and It follows that $\gen{a,c}\perp\gen{b,d}$.

Assume first that $\gen{a,c}$ is nondegenerate. If it is a line of
the geometry then $\gamma$ decomposes into 2 triangles so we might
assume that it is a nonsquare-type. It follows that $U=\gen{a,c}^{\perp}$
is a nondegenerate subspace of $V$ of dimension at least 3 and $b,d\in U$
and so by Lemma \ref{lem:connected subspaces} $b$ is connected to
$d$ inside of $U$. All elements of $U$ are connected to both $a$
and $c$ decomposing the $4$-gon into triangles.

Next assume that $\gen{a,c}$ is degenerate and consider again $U=\gen{a,c}^{\perp}$,
which is now a degenerate subspace of dimension at least 3 and $1$-dimensional
radical containing $b$ and $d$. If $b$ and $d$ can be embedded
in a $3$-dimensional subspace of $U$ that contains a line of the
geometry we can once again use Lemma \ref{lem:connected subspaces}
to decompose the $4$-gon.

The only bad case would be when $U$ itself is $3$ dimensional (that
is $n=3$) and contains no lines of the geometry. This is an impossibility.
Indeed let $L$ be a nondegenerate (hence nonsquare-type) subspace
of $W$. Note that $\gen{a,c}\le L^{\perp}$ and that $L^{\perp}$
is a 3-dimensional nonsquare-type space and so, by Lemma \ref{lem:no tricky subspaces}
cannot contain degenerate subspaces with square-type points like $\gen{a,c}$.\end{proof}
\begin{lem}
For $n\geq4$ and $q\ge7$ then every $5$-cycle is decomposable. \end{lem}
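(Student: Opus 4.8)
The plan is to proceed exactly as in the two preceding lemmas. A $5$-cycle here is $\gamma=v_1v_2v_3v_4v_5v_1$ where $v_1,\dots,v_5$ are square-type points and $v_i,v_{i+1}$ lie on a common square-type line for each $i$ (indices mod $5$). Since for $n\ge4$ every triangle is geometric and every $4$-cycle is decomposable, it suffices to produce a square-type point $p$ collinear to $v_1$, $v_2$ and $v_4$ (a consecutive pair together with the vertex adjacent to neither). Indeed, coning $\gamma$ off from such a $p$ gives
\[
\gamma\;\simeq\;(p\,v_1\,v_2\,p)\,(p\,v_2\,v_3\,v_4\,p)\,(p\,v_4\,v_5\,v_1\,p),
\]
a product of one triangle and two $4$-cycles (read as loops at $p$; this equals $\gamma$ up to conjugation by the edge $p\,v_1$, since concatenating the right side and cancelling the returns at $v_2$ and $v_4$ recovers $p\,v_1v_2v_3v_4v_5v_1\,p$). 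Each factor is null-homotopic by the previous results, so $\gamma$ is decomposable.

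Hence the real content is the geometric claim that such a $p$ exists. The key remark is that a square-type point $p$ \emph{orthogonal} to a square-type point $v$ is automatically collinear with it: the Gram matrix of $\gen{p,v}$ is then diagonal with two nonzero square entries, so $\gen{p,v}$ is a square-type line. Thus it is enough to find a square-type point in $\gen{v_1,v_2,v_4}^{\perp}$. Now $\gen{v_1,v_2}$ is a plus-type line, hence nondegenerate, so any radical vector of $\gen{v_1,v_2,v_4}$ meets $\gen{v_1,v_2}$ trivially; therefore $\rad\gen{v_1,v_2,v_4}$, and with it $\rad(\gen{v_1,v_2,v_4}^{\perp})$, has dimension at most $1$, while $\dim\gen{v_1,v_2,v_4}^{\perp}\ge(n+1)-3=n-2\ge2$. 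If $\gen{v_1,v_2,v_4}$ is nondegenerate then $\gen{v_1,v_2,v_4}^{\perp}$ is a nondegenerate subspace of dimension at least $2$, hence contains a nondegenerate line and so a square-type point (by the count of square-type points on plus- and minus-type lines); the same conclusion holds whenever $\dim\gen{v_1,v_2,v_4}^{\perp}\ge3$, since then the complement of its (at most $1$-dimensional) radical is nondegenerate of dimension at least $2$. This settles everything except the case $n=4$ with $\gen{v_1,v_2,v_4}$ a degenerate $3$-space.

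That residual case is the main obstacle. There $\gen{v_1,v_2,v_4}^{\perp}$ is a $2$-space with $1$-dimensional radical and nondegenerate complement $\gen{w}$, and the construction fails precisely when $w$ is nonsquare-type. I would dispose of it as in the ``without loss of generality'' step of the $4$-cycle lemma: before coning, replace $v_4$ by a homotopic choice $v_4'$. Using Lemma~\ref{lem:no tricky subspaces}, the lemma that in a $3$-space with $1$-dimensional radical all $2$-subspaces off the radical are nondegenerate of a single type, and a counting argument inside the spaces $\gen{v_3,v_4,v_5}^{\perp}$, $\gen{v_3,v_4}^{\perp}$, $\gen{v_4,v_5}^{\perp}$ (the latter two nondegenerate of dimension $\ge n-1\ge3$, which is exactly where $q\ge7$ enters, as in Lemmas~\ref{lem:connect} and \ref{lem:connected subspaces}), one chooses a square-type point $v_4'$ collinear with $v_3$, $v_4$, $v_5$ and with $\gen{v_1,v_2,v_4'}$ nondegenerate; passing from $v_4$ to $v_4'$ only inserts the two geometric triangles $v_3v_4v_4'$ and $v_4v_4'v_5$, so $\gamma$ is homotopic to the pentagon $v_1v_2v_3v_4'v_5v_1$, to which the previous paragraph applies. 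One also has the dihedral freedom to run the whole argument with any of the five triples $\{v_i,v_{i+1},v_{i+3}\}$, which supplies the slack needed to avoid the bad configuration; pinning down this last case analysis cleanly is the only delicate point in the proof.
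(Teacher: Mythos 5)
Your coning from a square-type point $p\perp v_1,v_2,v_4$ (one edge of the pentagon plus the opposite vertex) is exactly the decomposition the paper uses: it sets $W=\gen{a,c,e}$ and cones from a square-type point $x\in W^\perp$, splitting the pentagon into the triangle $axe$ and two $4$-gons, which is your picture under a relabelling. Your treatment of the generic case (where $\dim W^\perp\ge 3$, or $W$ nondegenerate) is correct and agrees with the paper. The genuine gap is your ``residual case,'' $n=4$ with $W=\gen{v_1,v_2,v_4}$ degenerate, which you treat as an obstruction and try to dodge by perturbing $v_4$ via a counting argument you yourself concede is ``the only delicate point'' and is not pinned down. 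That detour is in fact a red herring: the bad configuration never occurs. Since $\gen{v_1,v_2}$ is nondegenerate, $W=\gen{v_1,v_2}\oplus\rad W$, and as $\rad(W^\perp)=\rad W$, the $2$-space $W^\perp$ is a degenerate subspace of $\gen{v_1,v_2}^\perp$; when $n=4$ the latter is a nondegenerate square-type $3$-space, so Lemma~\ref{lem:no tricky subspaces} says every non-radical point of $W^\perp$ is square-type. Thus your cone vertex $p$ exists unconditionally, with no perturbation needed. The paper handles the degenerate case by a different and even shorter argument: every $2$-subspace of $W$ missing the radical has the same Gram determinant as the edge $\gen{v_1,v_2}$ and hence is a line of the geometry; since the two diagonals $\gen{v_1,v_4}$ and $\gen{v_2,v_4}$ meet only in the non-isotropic $\gen{v_4}$, the (isotropic) radical cannot lie on both, so one diagonal is a line and the pentagon splits at once into a triangle and a $4$-gon. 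Either observation closes your gap; as written, though, your proof is not complete.
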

\begin{proof}
Let $\gamma=abcdea$ be a $5$-cycle. Consider $W=\gen{a,c,e}$ which
is a three dimensional space with radical of dimension at most one
since $\gen{a,e}$ is line. If $W$ is nondegenerate then $W^{\perp}$
is a nondegenerate subspace of dimension at least two and so it contains
at least one point $x$ of the geometry. It follows that $x$ is collinear
to $a,c,e$. This decomposes the pentagon as a sum of the triangle
$axe$ and the two $4$-gons $axcb$ and $excd$.

Similarly if $W$ is degenerate and $v$ is its radical note that
any $2$-space in $W$ not passing through $v$ is a line of the geometry.
If $c$ and $e$ are collinear then the pentagon immediately decomposes
into a triangle and a $4$-gon so we might assume $v$ is on the line
$\gen{c,e}$.  It then follows that $a$ and $c$ must be collinear
and so the pentagon decomposes. \end{proof}
\begin{cor}
If $n\ge4$ and $q\ge7$ then the geometry is simply connected. 
\end{cor}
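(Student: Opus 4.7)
The plan is to combine the preceding lemmas via an inductive decomposition of cycles. By the earlier lemma that every cycle based at $x$ is homotopic to one contained in $\Sigma$, I may restrict attention to cycles whose vertices are square-type points and whose edges are square-type lines. The preceding results establish three base facts: every triangle is geometric, hence trivial; every $4$-cycle is decomposable into triangles; and every $5$-cycle is decomposable. It therefore suffices to show that every cycle of length $k\geq 6$ is homotopic to a sum of cycles of length at most $5$.

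I would proceed by induction on the length $k\geq 6$ of a cycle in $\Sigma$. Given a cycle $\gamma = a_0 a_1 \cdots a_{k-1} a_0$, focus on three consecutive points $a_0, a_1, a_2$. Since the collinearity graph has diameter at most two (by the connectedness lemma, using $n\geq 3$), there exists a square-type point $x$ collinear to both $a_0$ and $a_2$. If $a_0$ and $a_2$ are already collinear, take $x=a_1$, whereupon $\gamma$ splits immediately as the triangle $a_0 a_1 a_2 a_0$ together with the $(k-1)$-cycle $a_0 a_2 a_3 \cdots a_{k-1} a_0$. Otherwise, inserting and cancelling a return through $x$ makes $\gamma$ homotopic to the concatenation of the $4$-cycle $a_0 a_1 a_2 x a_0$ with the shorter cycle $a_0 x a_2 a_3 \cdots a_{k-1} a_0$ of length $k-1$. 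The $4$-cycle decomposes into triangles by the preceding lemma, and the shorter cycle decomposes by the inductive hypothesis.

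The main obstacle is already absorbed into the preceding lemmas: the hypotheses $n\geq 4$ and $q\geq 7$ are needed precisely for the $4$-cycle and $5$-cycle decompositions, which in turn rely on Lemma \ref{lem:connected subspaces} and the structural analysis of $3$-dimensional subspaces done in Section \ref{prelim}. Once those base cases are in hand, the inductive shortening argument above uses only diameter-two connectedness and is purely combinatorial, so it introduces no new constraints on $n$ or $q$. Combining the base cases with the induction yields that $\pi_1(\Gamma, x)$ is trivial, completing the proof.
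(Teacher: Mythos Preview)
Your overall strategy matches the paper's implicit argument: reduce to the point--line graph $\Sigma$, then use the fact that the collinearity graph has diameter $2$ (for $n\ge 3$) to reduce arbitrary cycles to cycles of length at most $5$, each of which is null-homotopic by the preceding lemmas. The paper records this as a corollary without further proof, relying on precisely this standard reduction.

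However, your inductive step as written does not actually shorten the cycle. In the case where $a_0$ and $a_2$ are not collinear and you route through an auxiliary point $x$, the residual cycle $a_0\, x\, a_2\, a_3\cdots a_{k-1}\, a_0$ has vertex sequence $a_0, x, a_2, a_3, \ldots, a_{k-1}$, which is still $k$ vertices and hence still length $k$, not $k-1$. You have simply replaced $a_1$ by $x$, and the induction stalls.

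The fix is to aim one vertex further: connect $a_0$ to $a_3$ instead of to $a_2$. By diameter $2$ there is a square-type point $x$ collinear to both $a_0$ and $a_3$ (or they are already collinear). This splits $\gamma$ into a cycle $a_0\, a_1\, a_2\, a_3\, x\, a_0$ of length at most $5$ and a cycle $a_0\, x\, a_3\, a_4\cdots a_{k-1}\, a_0$ of length at most $k-1$. With this adjustment the induction goes through and the rest of your argument is correct.
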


\subsection{The case $n=3$}

\label{n=00003D3}

Here we consider the case $n=3$. Throughout this section we will
assume that $q\ge47$.  First we will decompose triangles. There are
two types of triangles, those that generate a degenerate 3-space and
those that generate a nondegenerate space of nonsquare-type. We will
call the former \emph{degenerate triangles} and the later \emph{triangles
of nonsquare-type}. We first show that we can assume that two vertexes
of any triangle are orthogonal.
\begin{lem}
Any triangle can be decomposed into geometric triangles and a triangle
in which two vertexes are orthogonal. \end{lem}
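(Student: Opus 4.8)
The statement to prove is: any triangle $\gamma = abca$ of $\Gamma$ (with $n=3$, $q \ge 47$) can be decomposed into geometric triangles together with a triangle in which two of the three vertices are orthogonal. The strategy is to take the given triangle $abca$, find a new square-type point $b'$ that is collinear with both $a$ and $c$ (so that $ab'$, $b'c$ are lines of the geometry) and moreover satisfies $b' \perp a$ (or $b' \perp c$), and then split $\gamma$ as the concatenation of the triangle $ab'ca$ (which has two orthogonal vertices $a, b'$) with the two triangles $abb'a$ and $bcb'b$, which I will argue are geometric.

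First I would observe that $\gen{a,c}$ is either a square-type line, a nonsquare-type line, or a degenerate plane-section; in the first case $a$ and $c$ are already collinear and there is almost nothing to do (the triangle is already geometric or one applies the earlier "geometric triangle" machinery directly), so assume $\gen{a,c}$ is nonsquare or degenerate. In the nonsquare case $\gen{a,c}^\perp$ is a nonsquare-type line (here $n=3$, so $\dim V = 4$ and $\gen{a,c}^\perp$ is 2-dimensional), which by Lemma 3.3 contains $(q+1)/2$ square-type points; pick such a point $b'$. Then $b' \perp a$ and $b' \perp c$, both $\gen{a,b'}$ and $\gen{b',c}$ are square-type lines (Gram matrix $\mathrm{diag}(1,1)$ up to scaling), so $ab'ca$ is a triangle with two (in fact all three) orthogonal vertices. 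It remains to see that $abb'a$ and $bb'cb$ are geometric, i.e., each lies in $\{p\} \cup \res p$ for some point $p$: the triangle $abb'a$ lies inside the $3$-space $\gen{a,b,b'}$, and I would invoke Lemma 4.12 (any $3$-space containing a line of the geometry embeds in a proper square-type subspace) together with the definition of "geometric" — here one must be careful that in dimension $n=3$ a $3$-space is the whole orthogonal complement machinery is tighter, so I would instead argue directly that these small triangles are already handled by the preceding "geometric cycle" lemmas. If $\gen{a,c}$ is degenerate with radical $\gen{e}$, the cleaner route is: complete $e$ to a hyperbolic pair $\{e,f\}$ inside $a^\perp$ (possible since $a^\perp$ is nondegenerate), check $\gen{a,e,f}$ is square-type, and find a point $b'$ in this $3$-space collinear to both $a$ and $c$, using Corollary 3.5 / Lemma 3.6 to guarantee enough square-type points avoiding the bad lines; then $b' \perp a$ and one proceeds as before.

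The main obstacle I expect is the degenerate case together with the bookkeeping of which of the two "correction" triangles $abb'a$, $bcb'b$ is genuinely geometric in dimension $n = 3$. When $n \ge 4$ the earlier Corollary asserts every triangle is geometric outright, so this whole lemma is only needed for $n=3$; there the $3$-space $\gen{a,b,b'}$ might be all of a $3$-dimensional section of the $4$-space $V$, and one must ensure it contains a line of the geometry so that Lemma 4.12 applies — this is exactly the kind of "bad case" ruled out at the end of the $4$-cycle lemma, and a similar argument (if $\gen{a,b,b'}$ contained no line it would be nonsquare-type and its perp could not contain a degenerate section with square points) should close it. The secondary nuisance is verifying that the point $b'$ can be chosen to simultaneously (i) be square-type, (ii) be collinear to $a$, (iii) be collinear to $c$, and (iv) be orthogonal to $a$; the counting here is what forces $q \ge 47$ (via Lemma 3.1 / Joe's Lemma), and I would handle it by reducing to a sum-of-two-squares condition inside a fixed square-type plane and applying Corollary 3.5.

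\begin{proof}
Let $\gamma = abca$ be a triangle. If $\gen{a,c}$ is a square-type line then $a$ and $c$ are collinear, so $\gamma$ is already geometric (it lies in $\{a\} \cup \res a$, as $b, c \in a^\perp$-independent considerations put all three in the residue of the line $\gen{a,c}$ or of a point on it), and we are done. So assume $\gen{a,c}$ is either a nonsquare-type line or a degenerate $2$-space.

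\emph{Case 1: $\gen{a,c}$ is nonsquare-type.} Since $n=3$, $V$ has dimension $4$ and $\gen{a,c}^\perp$ is a $2$-dimensional nondegenerate subspace; as $\gen{a,c}$ is nonsquare-type, so is $\gen{a,c}^\perp$. By Lemma \ref{lem:connect}'s supporting count (Lemma 3.3) the line $\gen{a,c}^\perp$ carries $(q+1)/2 \ge 2$ square-type points; choose one, call it $b'$. Then $b' \perp a$ and $b' \perp c$, and the Gram matrices of $\gen{a,b'}$ and $\gen{b',c}$ are $\four{1}{0}{0}{1}$ after scaling, so both are square-type lines. Hence $ab'ca$ is a triangle whose vertices are pairwise orthogonal. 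Now consider $\gen{a,b,b'}$. It contains the line $\gen{a,b'}$ of the geometry, so by Lemma \ref{lem:three spaces into 4} it embeds in a proper square-type subspace $W_1$ of $V$; since $\dim W_1 < 4$ and it contains a line of the geometry, the cycle $abb'a$ lies in $\{p\} \cup \res p$ for a suitable point $p \in W_1$, i.e., $abb'a$ is geometric. The same argument applied to $\gen{b,c,b'} \ni \gen{b',c}$ shows $bcb'b$ is geometric. Finally $\gamma = abca$ is homotopic to the concatenation of $abb'a$, $bb'cb$, and $ab'ca$ (insert the return edge $b'$ appropriately and regroup), which exhibits $\gamma$ as a product of two geometric triangles and the triangle $ab'ca$ with two orthogonal vertices.

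\emph{Case 2: $\gen{a,c}$ is degenerate.} Let $\gen{e} = \rad(\gen{a,c})$. Since $a$ is square-type, $a^\perp$ is nondegenerate and contains $e$; complete $e$ to a hyperbolic pair $\{e,f\} \subseteq a^\perp$. Then $\gen{a,e,f}$ is the orthogonal sum of the square-type line $\gen a$ and the plus-type line $\gen{e,f}$, hence square-type; in particular it is a proper square-type $3$-space and, by Lemma \ref{lem:connect}, its square-type points form a connected subgeometry, and by Corollary \ref{cor:sum of squares} there are enough of them (more than the $O(1)$ bad lines obstruct, using $q \ge 47$) to choose a square-type point $b' \in \gen{a,e,f}$ with $b' \perp a$ and $b'$ collinear to $c$. (Concretely: $c = \lambda a + \mu e$ for some $\lambda, \mu$ with $\mu \neq 0$; writing $b' = \alpha e + \beta f \in a^\perp$ with $2\alpha\beta$ a nonzero square makes $\gen{a,b'}$ square-type, and the condition that $\gen{b',c}$ be square-type is a single nondegenerate quadratic condition on $(\alpha:\beta)$, satisfiable for $q \ge 7$ by the argument of Lemma \ref{lem:connect}.) With such a $b'$: $ab'ca$ has the orthogonal vertices $a, b'$; and $\gen{a,b,b'} \ni \gen{a,b'}$ and $\gen{b,c,b'} \ni \gen{b',c}$ are $3$-spaces containing a line of the geometry, hence by Lemma \ref{lem:three spaces into 4} embed in proper square-type subspaces, so $abb'a$ and $bcb'b$ are geometric. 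As in Case 1, $\gamma$ decomposes as these two geometric triangles together with $ab'ca$.

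In both cases $\gamma$ is homotopic to a product of geometric triangles and a single triangle in which two vertices are orthogonal, as claimed.
\end{proof}
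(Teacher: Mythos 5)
Your proof rests on a case analysis that cannot actually occur. You split on whether $\gen{a,c}$ is square-type, nonsquare-type, or degenerate — but in a triangle $\gamma = abca$ the three vertices are pairwise collinear, so $\gen{a,c}$ is by definition an edge of the triangle and hence always a square-type line. Your Cases 1 and 2 are vacuous. Moreover, your Case 0 conclusion is also unjustified: having $\gen{a,c}$ square-type does not make $\gamma$ geometric. A triangle is geometric exactly when it lies in $\{Z\}\cup\Res(Z)$ for some single object $Z$, and in the rank-$3$ geometry the only candidate is $Z=\gen{a,b,c}$ itself, which is an object iff it is square-type. This is precisely why the paper instead distinguishes the cases by the type of the $3$-space $W=\gen{a,b,c}$ (square-type: already geometric; degenerate; nonsquare-type) and builds auxiliary points inside $W$ or $\gen{v_2,v_3}^\perp$ rather than inside $\gen{a,c}^\perp$.

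There is a second independent gap: you invoke Lemma \ref{lem:three spaces into 4} to claim that $\gen{a,b,b'}$ embeds in a proper square-type subspace of $V$ and deduce that $abb'a$ is geometric. For $n=3$ we have $\dim V=4$, so a proper subspace has dimension at most $3$, and the lemma's proof (which adjoins a vector from $W^\perp$ or from $L^\perp\setminus e^\perp$) produces a $4$-dimensional space, i.e.\ all of $V$. That lemma genuinely requires $n\ge 4$ and cannot be used here; a nonsquare-type or degenerate $3$-space in a $4$-dimensional $V$ simply does not sit inside a proper square-type subspace. Repairing this is exactly the content you would need to re-derive: you must choose the auxiliary vertex so that the two correction triangles actually span square-type $3$-spaces, which is the sum-of-squares constraint the paper solves with Corollary \ref{cor:sum of squares}.
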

\begin{proof}
Let $v_{1}v_{2}v_{3}$ be a triangle and let $W=\langle v_{1},v_{2},v_{3}\rangle$.

First assume that we deal with a degenerate triangle. Note that in
the degenerate space $W$, any line that does not pass through the
radical is a line of the geometry. Indeed the determinant of the Gram
matrix for any such line will be the same as the one for any edge
of the triangle. Moreover let $v$ be the unique point on $\langle v_{2},v_{3}\rangle$
that is perpendicular to $v_{1}$. The point $v$ cannot be isotropic
because then it would have to equal the radical. The line $\langle v_{1},v\rangle$
cannot pass through the radical (because it would then be degenerate
and that's not the case) and so it would have to be a line of the
geometry. It follows that $v$ is a point of the geometry and you
can split the initial triangle into $v_{1}vv_{2}$ and $v_{1}vv_{3}$.

Now assume that the triangle is of nonsquare-type.  Take $e\in W\cap\langle v_{2},v_{3}\rangle^{\perp}$.
This is a nonsquare point. As above let $v$ be the point on $\langle v_{2},v_{3}\rangle$
that is perpendicular to $v_{1}$. If $v$ is isotropic then the line
$\langle v_{1},v \rangle$ equals $v^{\perp}$ and so it must contain $e$. However
it is a degenerate line so it only contains points of one type, a
contradiction. Therefore $v$ must be nonisotropic. If it is of square-type 
we are done.

We can therefore assume that $v$ is a nonsquare point. Let $f$ a
point such that $e,f$ is an orthogonal basis for $\langle v_{2},v_{3}\rangle^{\perp}$.
Since this line is of square-type we can assume that $(e,e)=(f,f)=t$
a nonsquare element. Note that $\langle f \rangle =W^{\perp}$. We claim that $v_{1}$
is collinear to at least one point on $\langle e,f\rangle$. To see
that we will look inside the space $U=\langle v_{1},e,f\rangle$.
Notice that $f\perp\langle v_{1},e\rangle$ and $\langle v_{1},e\rangle=v^{\perp}\cap W$
and so it is a square-type line.  Therefore the space $U$ is a nonsquare
space.

Let $v_{4}=xe+yf$ then $(v_{4},v_{4})=(x^{2}+y^{2})t$ and the Gram
matrix for the line $v_{1},v_{4}$ is \[
\left(\begin{array}{cc}
(x^{2}+y^{2})t & (e,v_{1})\\
(e,v_{1}) & 1\end{array}\right)\]
 and so its determinant is $(x^{2}+y^{2})t^{2}-(e,v_{1})^{2}$. By
Corollary \ref{cor:sum of squares} there exists $a\in\Fq$ so that
$a^{2}-(e,v_{1})$ is a square and for that $a$, there exists $x,y\in\Fq$
so that $(x^{2}+y^{2})t=a^{2}$. This decomposes the initial triangle
as a sum of one degenerate triangle $v_{2}v_{3}v_{4}$ and two triangles
$v_{4}v_{1}v_{2}$ and $v_{4}v_{1}v_{3}$in which two vertexes are
orthogonal. 
\end{proof}
The following lemma decomposes all degenerate triangles. 
\begin{lem}
\label{lem:degenerate triangles} Any degenerate triangle $e_{1},e_{2},v$
such that $(e_{1},e_{2})=0$ is homotopically trivial. \end{lem}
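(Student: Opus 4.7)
Normalise representatives so that $(e_1,e_1)=(e_2,e_2)=1$, extend $\{e_1,e_2\}$ to an orthonormal basis $\{e_1,e_2,e_3,e_4\}$ of $V$, and fix $i\in\Fq$ with $i^2=-1$. The radical of $W=\langle e_1,e_2,v\rangle$ is spanned by an isotropic vector of $\langle e_1,e_2\rangle^\perp$; after rescaling we take it to be $r=e_3+ie_4$. Writing $v=\alpha e_1+\beta e_2+r$ gives $\alpha^2+\beta^2=(v,v)=1$, and since the Gram determinants of $\langle e_1,v\rangle$ and $\langle e_2,v\rangle$ are $\beta^2$ and $\alpha^2$, the hypothesis that these are lines of $\Gamma$ forces $\alpha\beta\neq 0$.

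My strategy is a standard fan decomposition: I aim to produce an auxiliary square-type point $u=\langle y_1e_1+y_2e_2+y_3e_3+y_4e_4\rangle\notin W$ such that
\begin{enumerate}
\item each of $\langle e_1,u\rangle$, $\langle e_2,u\rangle$, $\langle v,u\rangle$ is a line of $\Gamma$, and
\item each of $\langle e_1,e_2,u\rangle$, $\langle e_1,v,u\rangle$, $\langle e_2,v,u\rangle$ is nondegenerate of square-type.
\end{enumerate}
Given such a $u$, each of the three sub-triangles $(e_1,e_2,u)$, $(e_2,v,u)$, $(v,e_1,u)$ lies inside the corresponding type-$3$ element of $\Gamma$, so is geometric and hence homotopically trivial. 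A standard homotopy (insert $u$ on the edge $v\to e_1$ of the original triangle, and then triangulate the resulting $4$-cycle along the chord through $u$) presents $(e_1,e_2,v)$ as a concatenation of these three trivial sub-triangles, and therefore as itself trivial.

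To produce $u$ I would first impose the linear relation $(u,v)=0$, equivalently $y_3+iy_4=-(\alpha y_1+\beta y_2)$; this makes the $\langle v,u\rangle$-condition automatic once $(u,u)$ is a nonzero square. A direct Gram-determinant computation then reduces the remaining requirements to the six conditions that $(u,u)$, $y_3^2+y_4^2$, $(u,u)-y_1^2$, $(u,u)-y_2^2$, $\beta^2(u,u)-y_1^2$, and $\alpha^2(u,u)-y_2^2$ all be nonzero squares in $\Fq$ (the last two come from the $3$-subspaces through $v$). I would combine Corollary~\ref{cor:sum of squares}, which counts triples with a prescribed sum of squares, with Lemma~\ref{lem:Joe's lemma}, whose ``a square decomposes as a sum of two squares with compatible square-classes'' conclusion is precisely the right tool for aligning the square-classes of the last two expressions, to show that a valid $(y_1,y_2,y_4)$ exists for every admissible $(\alpha,\beta)$ provided $q\ge 47$.

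The main obstacle is arranging the last two conditions—$\beta^2(u,u)-y_1^2$ and $\alpha^2(u,u)-y_2^2$ both nonzero squares—simultaneously, since they are coupled through $(u,u)$ and through the linear relation fixing $y_3,y_4$. The shape of Joe's lemma is designed for exactly this sort of coupling, and the numerical hypothesis $q\ge 47$ is precisely what that lemma needs. I expect the technical core of the argument to be a careful parameter count: specialise, say, $y_4$ (and/or $y_2$), reduce the problem to a single-variable polynomial existence statement in $y_1$, and invoke Joe's lemma to finish.
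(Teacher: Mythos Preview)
Your single-cone strategy differs from the paper's: the paper constructs \emph{three} auxiliary points $v_1=e_1-\alpha f$, $v_2=e_2-\beta f$, $v_3=xe+yf$ (with $\{e,f\}$ a hyperbolic pair in $\langle e_1,e_2\rangle^\perp$, $e\in W$) and decomposes the triangle over this ``icosahedral'' scaffold, arranging that several of the resulting sub-triangles are geometric for free and only those involving $v_3$ impose conditions. You instead seek a single apex $u$ making all three cone faces geometric. Your normalisations and the six Gram determinants you list are computed correctly.

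The genuine gap is the reduction to Joe's lemma. As written you have six square-class conditions in three free parameters, and ``specialise $y_4$ (and/or $y_2$) and reduce to one variable'' does not by itself produce the very particular shape $\{1+a^2,\,1+b^2,\,1+c^2\ \text{squares},\ a^2+b^2=c^2\}$ that lemma addresses. The specialization that works is $y_1=\beta z$, $y_2=\alpha z$, and then use the remaining degree of freedom in $(y_3,y_4)$ (after your linear constraint $y_3+iy_4=-(\alpha y_1+\beta y_2)=-2\alpha\beta z$) to force $(u,u)=1$; this is possible exactly when $2\alpha\beta z\neq 0$. With this choice your conditions 2, 5, 6 all collapse to ``$1-z^2$ is a nonzero square'', while conditions 3 and 4 become ``$1-\beta^2 z^2$'' and ``$1-\alpha^2 z^2$'' are nonzero squares. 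Under $z=ix$ these are exactly the three conditions $1+x^2,\,1+\alpha^2x^2,\,1+\beta^2x^2$ square that the paper's construction reaches after setting $y=-1/(2x)$, and the invocation of Joe's lemma then proceeds identically. So your fan approach is viable and arguably more direct than the paper's, but the collapsing substitution---not a generic parameter count---is the missing idea.
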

\begin{proof}
We can assume that $e_{1}$ and $e_{2}$ have norm one. Let $e,f$
be a hyperbolic basis for $\langle e_{1},e_{2}\rangle^{\perp}$ such
that $e\in W$ and that $v=\alpha e_{1}+\beta e_{2}+e$ with $\alpha^{2}+\beta^{2}=1$.
We construct the points $v_{1}=e_{1}-\alpha f$, $v_{2}=e_{2}-\beta f$.

Note that $v_{2}$ is perpendicular (hence collinear) to $e_{1}$
and $v$ and that $v_{1}$ is perpendicular to $e_{2}$ and $v$.
We will construct $v_{3}=xe+yf$ such that it will complete the icosahedron.
The conditions are as follows.

The lines $\gen{v_{3},v_{1}}$ and $\gen{v_{3},v_{2}}$ have Gram
matrix $\left(\begin{array}{cc}
2xy & -\alpha x\\
-\alpha x & 1\end{array}\right)$ and respectively $\left(\begin{array}{cc}
2xy & -\beta x\\
-\beta x & 1\end{array}\right)$so the determinants are $2xy-\alpha^{2}x^{2}$ respectively $2xy-\beta^{2}x^{2}$.

The lines $v_{3},e_{i}$ have diagonal Gram matrices and determinant
$2xy$.

The triangles $(e_{1},v_{2},v)$, $(v,v_{1},v_{2})$ and $(e_{2},v_{1},v)$
are geometric.

The plane of the triangle $(v_{3},v_{1},v_{2})$ has Gram matrix \[
\left(\begin{array}{ccc}
2xy & -\alpha x & -\beta x\\
-\alpha x & 1 & 0\\
-\beta x & 0 & 1\end{array}\right)\]
 and so it has determinant $2xy-x^{2}$.

The plane of the triangle $(v_{3},e_{1},v_{2})$ has Gram matrix \[
\left(\begin{array}{ccc}
2xy & 0 & -\beta x\\
0 & 1 & 0\\
-\beta x & 0 & 1\end{array}\right)\]

and determinant $\frac{1}{\alpha^{2}}2xy-x^{2}$.

The plane of the triangle $(v_{3},e_{2},v_{1})$ has Gram matrix \[
\left(\begin{array}{ccc}
2xy & 0 & -x\\
0 & 1 & 0\\
-x & 0 & 1\end{array}\right)\]
 of determinant $2xy-\beta^{2}x^{2}$.

Therefore the conditions we need to satisfy are: $2xy,2xy-x^{2},2xy-\alpha^{2}x^{2},2xy-\beta^{2}x^{2}$
are all squares.

We can pick $y=-\frac{1}{2x}$ to get that the conditions are (since
-1 is a square) $1+x^{2},1+\alpha^{2}x^{2},1+\beta^{2}x^{2}$ are
all squares. In particular if we denote $c=x,a=\alpha x,b=\beta x$
we note that the conditions are exactly the ones in Lemma \ref{lem:Joe's lemma}
and in particular each value of $x$ will give a decomposition of
all possible triangles that involve a certain choice of
$\alpha$ and $\beta$ depending on $x$. Note also that there are
$\frac{q-1}{2}$ choices for $x$ and just as many choices for the
point $(\alpha,\beta)$. This completes the proof. \end{proof}
\begin{lem}
Any nonsquare-type triangle $e_{1}e_{2}v$ with $e_{1}\perp e_{2}$
is null homotopic. \end{lem}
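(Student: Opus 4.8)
The plan is to mimic the strategy of Lemma~\ref{lem:degenerate triangles} but start from a nonsquare-type configuration. Let $W=\gen{e_1,e_2,v}$, a nondegenerate $3$-space of nonsquare-type with $e_1\perp e_2$. By the previous lemma we may assume $(e_1,e_1)=(e_2,e_2)=t$ for a fixed nonsquare $t$, and we write $v=\alpha e_1+\beta e_2$ suitably scaled so that $(v,v)=1$, i.e.\ $(\alpha^2+\beta^2)t=1$. First I would pass to $W^\perp$: since $W$ is nonsquare-type and $n=3$, $W^\perp=\gen{f}$ is a nonsquare-type point, which we may scale to $(f,f)=t$ as well. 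Working inside the $4$-space $\gen{e_1,e_2,v,f}=\gen{e_1,e_2,f}\oplus\langle\,\cdot\,\rangle$\dots more precisely inside $U=\gen{e_1,e_2,f}$, I would look for auxiliary square-type points $v_1,v_2,v_3$ that, together with $e_1,e_2,v$, fill in the faces of an icosahedron-type decomposition, exactly as in the degenerate case.

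The key computation is to record the Gram matrices of all the lines and planes spanned by pairs and triples among $\{e_1,e_2,v,v_1,v_2,v_3\}$ as polynomials in the coordinates of the $v_i$'s. The natural ansatz is to take $v_1,v_2$ to be combinations of $e_1$ (resp.\ $e_2$) with $f$ chosen so that $v_1\perp e_2,v$ and $v_2\perp e_1,v$ (so those edges are automatically collinear), and to take $v_3\in\gen{e_1,e_2,f}$ (or in a plane containing $f$) with two free parameters. After clearing the obvious geometric triangles, one is left with a handful of determinant conditions of the shape ``(quadratic in the parameters) is a nonzero square''. Then I would substitute the analogue of $y=-1/(2x)$ used in the previous lemma to collapse these to conditions of the form $1+x^2$, $1+a^2$, $1+b^2$ are nonzero squares with $c^2=a^2+b^2$ for appropriate $a,b,c$ depending linearly on the single remaining parameter $x$ and on $\alpha,\beta$ --- at which point Lemma~\ref{lem:Joe's lemma} (valid since $q\ge 47$) produces the needed point, and each admissible $x$ handles all triangles with the corresponding $(\alpha,\beta)$.

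The main obstacle I anticipate is getting the bookkeeping of the decomposition right: one must choose $v_1,v_2,v_3$ so that (i) every face of the resulting simplicial sphere is a \emph{geometric} triangle (a triangle whose three vertices lie in a common element of $\Gamma$ together with its residue, which here means its spanning plane is nondegenerate of square-type, or else it is forced collinear/degenerate-with-one-point-type), and (ii) the determinant conditions, after the $y=-1/(2x)$ substitution, genuinely reduce to the exact hypotheses of Lemma~\ref{lem:Joe's lemma} rather than to some slightly different system. In particular one must check that the extra nonsquare scalars $t$ floating around (from $(e_i,e_i)=(f,f)=t$) cancel in pairs in each relevant determinant --- this is the analogue of the remark in the proof of Corollary~\ref{cor:sum of squares} that multiplying a vector by $\pm1$ is the only norm-preserving scaling --- so that what remains really is ``$1+(\text{square})$ is a square''. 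Once the faces and the reduction are pinned down, the argument is essentially a transcription of the degenerate-triangle proof, and the count of $\tfrac{q-1}{2}$ choices of $x$ matching $\tfrac{q-1}{2}$ choices of $(\alpha,\beta)$ shows every such triangle is handled.
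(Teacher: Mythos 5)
Your plan takes a genuinely different route from the paper and, as written, has a setup error that would derail it.

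\textbf{Setup error.} You write $v=\alpha e_1+\beta e_2$, but that would place $v$ in the plane $\gen{e_1,e_2}$, so $e_1,e_2,v$ would span only a $2$-space, contradicting the hypothesis that the triangle is of nonsquare type (\ie spans a nondegenerate nonsquare $3$-space). In the degenerate-triangle proof, the analogous ansatz is $v=\alpha e_1+\beta e_2+e$ with $e$ the radical; here there is no radical, and $v$ necessarily has a component transverse to $\gen{e_1,e_2}$. Compounding this, you then propose to run the construction inside $U=\gen{e_1,e_2,f}$ with $f\in W^\perp$, but since $f\perp v$ and $v\notin\gen{e_1,e_2}$, the vertex $v$ is not in $U$ at all, so $U$ cannot host a simplicial sphere containing the original triangle.

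\textbf{Where the approaches differ.} Even setting aside the setup, you are attempting to repeat the icosahedron decomposition and rerun the field arithmetic of Lemma~\ref{lem:Joe's lemma} in the nonsquare setting. The paper avoids all of this. It does \emph{not} work in the triangle's plane: it picks an isotropic $e\in\gen{e_1,e_2}^\perp$ and forms the \emph{degenerate} $3$-space $W=\gen{e_1,e_2,e}$, which is not the span of the triangle. Because $e\not\perp v$, the line $W\cap v^\perp$ lies in $\Gamma$, and two mutually orthogonal points $v_1,v_2$ on it (chosen so $v_i\perp e_i$) replace $v$, turning the triangle into a $4$-gon $e_1e_2v_2v_1$ via three geometric triangles. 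A further point $x$ on $\gen{v_1,v_2}$ chops that $4$-gon into triangles $e_1e_2x$, $e_2xv_2$, $xv_1e_1$, all lying in the degenerate space $W$ and each with a pair of orthogonal vertices, so Lemma~\ref{lem:degenerate triangles} applies directly. No new appeal to Lemma~\ref{lem:Joe's lemma} and no new Gram-matrix bookkeeping is needed.

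\textbf{Assessment.} The missing idea is the reduction: rather than reproving the hard arithmetic in the nondegenerate nonsquare setting (where it is not at all clear that the $y=-1/(2x)$ substitution produces exactly the Joe's-lemma system, since $W$ has no radical and no hyperbolic pair to anchor the construction), one should move the triangle into a degenerate $3$-space and invoke the already-proven case. Your estimate that ``the argument is essentially a transcription of the degenerate-triangle proof'' is optimistic; the geometry is structurally different, and the paper's reduction is both shorter and avoids all the risks you flag under ``main obstacle''.
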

\begin{proof}
Consider $e$ an isotropic vector in $\gen{e_{1},e_{2}}^{\perp}$
and $W=\gen{e_{1},e_{2},e}$. Note that $e\not\perp v$ and so $W\cap v^{\perp}$
is a line of the geometry. Consider $v_{1},v_{2}\in W\cap v^{\perp}$
so that $v_{i}\perp e_{i}$ and $v_{1}\perp v_{2}$. Note that $v_{i}$
are points of the geometry and that they are collinear to $v$. Moreover
the triangles $vv_{i}e_{i}$ and $vv_{1}v_{2}$ are all geometric.
It follows that our triangle $e_{1},e_{2},v$ is homotopic to the
4-gon $e_{1}e_{2}v_{2}v_{1}$. Pick a point $x$ of the geometry on
the line $\gen{v_{1},v_{2}}$. Since $\gen{e_{1},v_{2}}$ and $\gen{e_{2},v_{1}}$
both contain $e$, the radical of $W$, it follows that $\gen{x,e_{i}}$
cannot contain it so they must be lines of the geometry. This means
that the 4-gon $e_{1}e_{2}v_{2},v_{1}$ can be decomposed into triangles
$e_{1}e_{2}x$, $e_{2}xv_{2}$ and $xv_{1}e_{1}$, all of which are
degenerate with a pair of perpendicular vertexes hence null homotopic
by Lemma \ref{lem:degenerate triangles} \end{proof}
\begin{lem}
\label{lem:degenerate 4-gons} Any 4-gon that lies in a degenerate
$3$-space is null homotopic. \end{lem}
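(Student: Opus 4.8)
The plan is to produce a single square-type point $x$ of the geometry lying inside the degenerate $3$-space $W$ that is collinear with all four vertices of the $4$-gon; the $4$-gon then decomposes into four triangles contained in $W$, each of which is null homotopic by the lemmas already established.

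First I would record the structure of $W$. Since $W$ contains the edges of the $4$-gon, which are square-type (hence nondegenerate) lines of the geometry, its radical is $1$-dimensional: a nondegenerate $2$-space together with a radical of dimension at least $2$ cannot fit inside a $3$-space. Write $\gen{r}=\rad{W}$. By the lemma that in a $3$-space with $1$-dimensional radical every $2$-subspace off the radical is nondegenerate of one fixed type, and since the edges of the $4$-gon show that this type is square, a $2$-subspace of $W$ is a line of the geometry exactly when it does not contain $r$. It follows that, for points $P,Q$ of $W$ other than $\gen{r}$, the line $\gen{P,Q}$ contains $r$ precisely when $P$ and $Q$ have the same image in the quotient plane $W/\gen{r}$; equivalently, $P$ and $Q$ are collinear inside $W$ exactly when their images in $W/\gen{r}$ are distinct.

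Next I would count those images. Fix a complement $\gen{e_1,e_2}$ of $\gen{r}$ in $W$; it lies off the radical, so it is a square-type, hence plus-type, line. Every square-type point of $W$ has the same image in $W/\gen{r}$ as some square-type point of $\gen{e_1,e_2}$, and by the lemma on plus-type lines there are exactly $\frac{q-1}{2}$ of the latter. The images of $a,b,c,d$ are therefore among these $\frac{q-1}{2}$ classes and number at most four; since $q\ge 47$ gives $\frac{q-1}{2}\ge 23>4$, I can choose a square-type point $x\in\gen{e_1,e_2}$ whose image in $W/\gen{r}$ differs from those of $a,b,c,d$. Then for each vertex $t\in\{a,b,c,d\}$ the line $\gen{x,t}$ does not contain $r$, so it is a square-type line of the geometry, and $x$ is collinear with all four vertices.

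Since $x$ is collinear with every vertex, $\gamma=abcda$ decomposes as a sum of the four triangles $abx$, $bcx$, $cdx$, $dax$. Each lies in $W$: if its three vertices span $W$ it is a degenerate triangle, and otherwise its vertices lie on a common line of $W$, giving a geometric cycle. Geometric cycles are null homotopic, and a degenerate triangle is null homotopic by the triangle-decomposition lemma together with Lemma \ref{lem:degenerate triangles} (the residual triangle with two orthogonal vertices produced by that decomposition is again degenerate). Hence $\gamma$ is null homotopic. The only point that needs a little care is the dictionary between ``$\gen{P,Q}$ meets the radical'' and ``$P,Q$ have equal image in $W/\gen{r}$'' and the resulting count of available square classes; once that is set up, the pigeonhole step is immediate from $q\ge 47$ and the rest is a direct reduction to the two triangle lemmas already proved.
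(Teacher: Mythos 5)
Your proof is correct and follows essentially the same strategy as the paper: find a single square-type point in $W$ collinear with all four vertices by a pigeonhole count, decompose the $4$-gon into triangles with that apex, and dispose of each triangle via the geometric-cycle lemma or the degenerate-triangle lemmas. The only difference is where the cone point is sought: the paper takes it on the line $\gen{e_1,e_2}$ itself, observing that this line carries $\tfrac{q-1}{2}$ square points of which at most two fail to be collinear with $e_3$ and $e_4$, giving the bound $q\ge 7$; you instead take it on a complement of the radical and count images in $W/\gen{r}$, which gives the same conclusion (with more room to spare than needed, as $q\ge 47$ is already assumed throughout this section). Your reduction in the final paragraph is handled with a bit more care than the paper's, which is harmless; and your remark that a degenerate triangle must first be split into triangles with two orthogonal vertices before Lemma~\ref{lem:degenerate triangles} applies is exactly right and matches the paper's intended chain of reductions.
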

\begin{proof}
Let $e_{1}e_{2}e_{3}e_{4}$ be such a 4-gon, $W=\gen{e_{1},e_{2}.e_{3},e_{4}}$
and $v$ be the radical of $W$. Any $2$-dimensional space of $W$
not passing through $v$ is a line of the geometry. The line $\gen{e_{1},e_{2}}$
contains $\frac{q-1}{2}$ points of the geometry and all but possibly
2 are collinear with both $e_{3}$ and $e_{4}$. So if $q\ge7$ there
exists $e$ on this line so that the $4$-gon decomposes as 3 triangles
$e_{1}e_{2}e$, $e_{2}e_{3}e$ and $e_{1}e_{4}e$. \end{proof}
\begin{lem}
Any $4$-gon is null homotopic. \end{lem}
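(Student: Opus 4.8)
The plan is to reduce an arbitrary $4$-gon $\gamma=abcda$ to configurations already shown to be null homotopic: triangles (the triangle lemmas of this subsection show that every triangle is null homotopic) and $4$-gons contained in a degenerate $3$-space (Lemma~\ref{lem:degenerate 4-gons}). The basic device, the same one used in the proof of Lemma~\ref{lem:degenerate 4-gons}, is that a square-type point $e$ collinear to all four of $a,b,c,d$ lets us rewrite $\gamma$ as a product of triangles by inserting $e$ and cancelling the resulting returns at $e$; so the whole problem is to produce such a point, and the natural place to look is on an edge-line $L=\langle a,b\rangle$, where $e$ is automatically collinear to $a$ and $b$ and only needs to be collinear to $c$ and to $d$.

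First I would clear the easy configurations. If a diagonal $\langle a,c\rangle$ or $\langle b,d\rangle$ is a square-type line then $\gamma$ splits into two triangles; if three consecutive vertices lie on a common line then the corresponding return can be deleted and $\gamma$ collapses to a triangle. Write $W=\langle a,b,c,d\rangle$, of dimension $2$, $3$ or $4$. If $W$ is degenerate then $\dim W=3$, $\gamma$ lies in a degenerate $3$-space, and Lemma~\ref{lem:degenerate 4-gons} applies. If $W$ is nondegenerate of square-type with $\dim W\le 3$, then $W^{\perp}$ is either a square-type point or a square-type line, so it contains a square-type point $e$; as $e\perp a,b,c,d$ each line $\langle e,p\rangle$ has diagonal Gram matrix of square determinant, so $e$ is collinear to all four vertices and $\gamma$ is null homotopic.

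The remaining, and genuinely new, case is that $W$ is nondegenerate but not of square-type; this includes $W=V$, where $W^{\perp}=0$ so the previous trick is unavailable. Here I would either look for $e$ on the edge-line $L=\langle a,b\rangle$ directly, or first simplify: since the diagonal $N:=\langle a,c\rangle$ is nonsquare (or degenerate), $N^{\perp}$ is again of nonsquare-type and contains square-type points, and by triangle moves as in the triangle-decomposition lemma one can replace $b$ and $d$ by points of $N^{\perp}$ still realizing the $4$-gon. That reduces matters to the symmetric configuration $V=N\perp M$ with $N$ and $M$ both nonsquare-type lines, $a,c$ square-type points of $N$, and the other two vertices square-type points of $M$. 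For such a $4$-gon one computes that, on the edge-line $\langle a,b\rangle$, the conditions ``$e$ of square-type'', ``$\langle e,c\rangle$ of square-type'' and ``$\langle e,d\rangle$ of square-type'' become the requirement that three explicit binary quadratic forms in the parameter of $e$ simultaneously represent nonzero squares; the locus of good $e$ is the set of $\Fq$-points of a curve of bounded genus, and a Hasse-Weil estimate provides such an $e$ once $q$ is large enough. Given $e$, collinearity with $a,b,c,d$ yields the triangle decomposition.

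The step I expect to be the main obstacle is exactly this last existence statement in the full-dimensional/symmetric case. One has to verify that the three square-conditions are not jointly incompatible --- that the discriminants appearing as coefficients do not conspire so that the quadratics represent only nonsquares at the same parameter value --- and that the genus bound keeps the count positive for every $q\ge 47$, the standing hypothesis of this subsection, treating by hand any small sporadic values exactly as in the proof of Lemma~\ref{lem:Joe's lemma}. If this turned out to need a larger $q$ than $47$, one would instead have to exploit extra structure of the symmetric configuration --- such as the isometry interchanging $N$ and $M$ --- to trivialize that particular $4$-gon without a brute-force count; making one of these approaches work uniformly is the crux.
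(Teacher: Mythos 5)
Your proposal differs from the paper's proof at the decisive step, and the part you flag as ``the crux'' is a genuine gap.

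The paper's argument avoids any counting or Hasse--Weil input for this lemma. Having disposed of $4$-gons lying in a degenerate $3$-space in Lemma~\ref{lem:degenerate 4-gons}, it picks isotropic vectors $e\perp\gen{e_1,e_2}$ and $f\perp\gen{e_3,e_4}$ attached to \emph{opposite} edges, forms the degenerate $3$-spaces $W=\gen{e_1,e_2,e}$ and $W'=\gen{e_3,e_4,f}$, and intersects them: $W\cap W'$ is $2$-dimensional, and either it is a line of the geometry or it contains $e$ or $f$ and all its $1$-spaces are points of the geometry. In either case one finds $v,v'\in W\cap W'$ (for $q\ge 7$) collinear to $e_1,e_4$ and $e_2,e_3$ respectively, which splits the original $4$-gon into two triangles and two $4$-gons $(e_1,e_2,v',v)$, $(e_3,e_4,v,v')$, each of which now \emph{does} lie inside a degenerate $3$-space ($W$ or $W'$), so Lemma~\ref{lem:degenerate 4-gons} finishes. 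The point of the construction is precisely to manufacture degenerate $3$-spaces containing the new $4$-gons; no arithmetic estimate is needed, and the constraint is only $q\ge 7$, not an unproven Hasse--Weil inequality.

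Your plan, by contrast, tries to produce a single point on the edge-line $\gen{a,b}$ collinear to all four vertices, and in the hard case (here $n=3$, so $\dim V=4$; note $V$ itself is of square-type, so your description ``$W$ nondegenerate but not of square-type; this includes $W=V$'' is off --- the actual criterion is that $W^\perp$ contains no square-type point, which happens for $W=V$ and for $W$ nondegenerate nonsquare of dimension $3$) you appeal to an unverified point-count on a curve. As written this is an outline, not a proof: the reduction to the symmetric configuration $V=N\perp M$ ``by triangle moves'' is not carried out (in particular it does not obviously treat a degenerate diagonal $\gen{a,c}$), and the final existence of $e$ --- that the relevant quadratic square-conditions are simultaneously satisfiable for every $q\ge 47$, with the small $q$ checked --- is exactly the step you leave open. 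That step is where the work is, and it is what the paper's $W\cap W'$ trick is designed to sidestep. If you want to salvage your route, you must actually write down the curve, bound its genus, and run the estimate and the sporadic check, as is done in Lemma~\ref{lem:Joe's lemma}; but the paper's construction is both shorter and quantitatively cheaper.
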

\begin{proof}
Let $e_{1}e_{2}e_{3}e_{4}$ be a 4-gon that is not contained in a
degenerate $3$-space. Consider $e$ and $f$ two $1$-dimensional
isotropic spaces so that $e\perp\gen{e_{1},e_{2}}$ and $f\perp\gen{e_{3},e_{4}}$.
Consider $W=\gen{e_{1},e_{2},e}$ and $W'=\gen{e_{3},e_{4},f}$ and
let us look at $W\cap W'$. Note first that since $\gen{e_{1},e_{2}.e_{3},e_{4}}$
is not a degenerate $3$-space, $w\ne W'$. If $W\cap W'$ contains
$e$ or $f$ then all the $1$-spaces on $W\cap W'$ are points of
the geometry. Moreover all but possibly two are collinear with all
of $e_{i}$ decomposing the $4$-gon into triangles. So we might assume
that neither $e$ nor $f$ lie on $W\cap W'$ and so the latter is
a line of the geometry. As before if $q\ge7$, there exists two points
of the geometry $v,v'\in W\cap W'$ so that $v$ is collinear to $e_{1}$
and $e_{4}$ and $v'$ is collinear to $e_{2}$ and $e_{3}$. This
decomposes the $4$-gon as a product of 2 triangles $( e_{1},e_{4},v)$
and $(e_{2},e_{3},v')$ and two $4$-gons $(e_{1},e_{2},v',v)$
and $(e_{3},e_{4},v,v')$. Note moreover that both $4$-gons are as
in Lemma \ref{lem:degenerate 4-gons} and so are null homotopic finishing
the proof. \end{proof}
\begin{lem}
Every pentagon is null homotopic. \end{lem}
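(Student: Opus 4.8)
The plan is to write the pentagon $\gamma=abcdea$ as a product of triangles and $4$-gons --- all null homotopic by the preceding lemmas --- together with insertions and deletions of returns.

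First I would clear the degenerate situations. If two non-consecutive vertices are collinear the pentagon collapses: if $b\sim d$ the triangle $bcd$ replaces the path $b\to c\to d$ by the edge $b\to d$, giving the $4$-gon $abde$; the cases $a\sim d$ and $b\sim e$ are symmetric, while $a\sim c$ (or $c\sim e$) makes $\gamma$ homotopic to a triangle times a $4$-gon. So one may assume every non-consecutive pair of vertices is non-collinear, hence (diameter $2$) at distance exactly $2$. Next I would look at $W=\gen{a,c,e}$; since $a\sim e$ the line $\gen{a,e}$ is nondegenerate, so $\dim W=3$ and $\rad W$ has dimension at most $1$. If $W$ is degenerate with radical $\gen{v}$, then every $2$-space of $W$ not through $v$ is a line of the geometry, and non-collinearity of $\{a,c\}$ and of $\{c,e\}$ forces $v\in\gen{a,c}\cap\gen{c,e}=\gen{c}$, impossible since $v$ is isotropic and $c$ is not. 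If $W$ is nondegenerate of square-type, then $W^{\perp}$ is a square-type point $x$, collinear to $a$, $c$, $e$ and --- by the exclusions just made --- distinct from every vertex; inserting a return at $c$ and using the $4$-gons $abcx$ and $cdex$ and the triangle $aex$ shows $\gamma$ is null homotopic.

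There remains the genuinely new case: $W$ nondegenerate of nonsquare-type, so $W^{\perp}$ is a nonsquare point and cannot serve as the auxiliary point. Here I would reroute a non-consecutive pair through a common neighbour. Since $b$ and $d$ are at distance $2$, choose a common neighbour $x\ne c$ of $b$ and $d$; the $4$-gon $bcdx$ is null homotopic, so $\gamma$ is homotopic to the pentagon $ab\,x\,de$, whose distinguished $3$-space is now $\gen{a,x,e}$. If $x$ can be chosen so that $\gen{a,x,e}$ is \emph{not} of nonsquare-type, the new pentagon falls under a case already handled (a coincidence of its auxiliary point with one of its vertices only forces a further collapse). To find such an $x$, put $L=\gen{a,e}$ (a plus-type line) and let $\pi\colon V\to L^{\perp}$ be the orthogonal projection: $\gen{a,x,e}$ is degenerate iff $\pi(x)$ is isotropic, and is of nonsquare-type iff $\pi(x)$ spans a nonsquare point of the plus-type line $L^{\perp}$. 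Now $L^{\perp}$ has only $(q-1)/2$ nonsquare points and $2$ isotropic points, while the common neighbours of $b$ and $d$ include (when $\gen{b,d}$ is nondegenerate) all $(q+1)/2$ square-type points of the definite line $\gen{b,d}^{\perp}$ and many others; a count of square versus nonsquare points --- valid for $q\ge47$ --- should produce a good $x$, \emph{unless} $\pi$ collapses the relevant line onto a single nonsquare point of $L^{\perp}$, which forces an accidental intersection with $L$. In that exceptional configuration one reruns the argument with the pair $\{a,d\}$ or $\{b,e\}$ --- rerouting these modifies, respectively, the vertex $e$ or the vertex $a$ of the distinguished triple $\{a,c,e\}$ --- and checks that the three exceptional configurations are mutually exclusive.

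The routine parts are routine; the hard part will be the nonsquare-type case, namely verifying that at least one of the three available reroutes lands in a previously-settled pentagon. This reduces to bookkeeping about how the planes $\gen{b,d}^{\perp}$, $\gen{a,d}^{\perp}$, $\gen{b,e}^{\perp}$ meet the hyperbolic line $\gen{a,e}$ and its companions $\gen{a,c}$, $\gen{c,e}$, and how many square-type points each such plane carries --- elementary over $\Fq$ once the configuration is fixed, but the case distinctions are the only delicate point.
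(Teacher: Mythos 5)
Your opening reductions are fine: collapsing adjacent non-consecutive collinear pairs, ruling out the degenerate $\gen{a,c,e}$ by locating the radical at $\gen{c}$, and using $W^\perp$ as the auxiliary point when $W=\gen{a,c,e}$ is of square-type are all correct and clean. The problem is the nonsquare-type case, which is where all the real work lies and which you explicitly leave unfinished --- ``the hard part will be the nonsquare-type case \ldots the case distinctions are the only delicate point.'' That is a genuine gap, not a routine verification: the rerouting step requires (a) handling $\gen{b,d}$ degenerate (you only sketch the nondegenerate case), (b) dealing with the situation where the projection $\pi$ restricted to $\gen{b,d}^\perp$ has a kernel meeting $L$ in a nonsquare or isotropic point, and (c) actually proving that among the three possible reroutes at least one lands in a settled case --- your ``checks that the three exceptional configurations are mutually exclusive'' is asserted, not argued. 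Without that, the proof does not close.

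It is also worth noting that the paper's route avoids this case explosion entirely and is quite different from yours. Rather than working with the hyperplane $\gen{a,c,e}$ spanned by alternating vertices, the paper takes the perps $l=\gen{p_2,p_3}^\perp$ and $l'=\gen{p_4,p_5}^\perp$ of two opposite edges (both plus-type lines), picks isotropic $v\in l$, $v'\in l'$ with $v\not\perp v'$, and forms the degenerate $3$-spaces $W=\gen{v,p_2,p_3}$, $W'=\gen{v',p_4,p_5}$. The intersection $W\cap W'$ is a line of the geometry, and almost every point on it is collinear to all four of $p_2,p_3,p_4,p_5$, which produces the decomposition directly; the only fallback is when every pair of isotropic points of $l$ and $l'$ is orthogonal, forcing $l\perp l'$ and hence $p_2\perp p_5$, which collapses the pentagon immediately. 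That argument never has to distinguish the type of $\gen{p_1,p_3,p_5}$. If you want to salvage your approach, you must carry out the bookkeeping you postponed; but the paper's construction via degenerate $3$-spaces is the cleaner road and is the one you should compare against.
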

\begin{proof}
Let $p_{1},..p_{5}$ be a pentagon. Consider the lines $l=<p_{2},p_{3}>^{\perp}$
and $l'=<p_{4},p_{5}>^{\perp}$. They are nondegenerate lines of plus-type 
and so they each contain two isotropic points.

Assume that we can find $v\in l$ and $v'\in l'$ isotropic vectors
such that $v\not\perp v'$ then the spaces $W=<v,p_{2},p_{3}>$ and
$W'=<v',p_{4},p_{5}>$ are both degenerate of rank two and $v$ respectively
$v'$ are their radicals. It follows as above that any line not passing
through the radicals is a line of the geometry. In particular the
intersection $W\cap W'$ is a line of the geometry. Moreover each
of $p_{2},p_{3},p_{4},p_{5}$ are collinear to all but one point of
$W\cap W'$. In particular if $\frac{q-1}{2}>3$ that is $q>7$ there
exist points $v_{1},v_{2}$ on $W\cap W'$ such that $v_{1}$ is collinear
to $p_{2}$ and $p_{5}$ and $v_{2}$ is collinear to $p_{3}$ and
$p_{4}$.  The pentagon can then be decomposed into three $4$-gons
$(p_{1},p_{2},v_{1},p_{5})$, $(p_{2},v_{1},v_{2},p_{3})$, $(p_{4},v_{2},v_{1},p_{5})$
and a triangle $(p_{3},v_{2},p_{4})$.

The remaining case is when any pair of isotropic points from $l$
and $l'$ are orthogonal. This means $l\perp l'$ and so $l'=l^{\perp}=<p_{2},p_{3}>$
respectively $l=l'^{\perp}=<p_{4},p_{5}>$. In particular $p_{2}\perp p_{5}$
and so the pentagon decomposes into a $4$-gon $(p_{2},p_{3},p_{4},p_{5})$
and a triangle $(p_{1},p_{2},p_{5})$. \end{proof}
\begin{cor}
If $n=3$ and $q\ge47$ then the geometry is simply connected. 
\end{cor}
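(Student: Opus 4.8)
The plan is to feed the lemmas of this section into the standard simple--connectedness argument, exactly as the corollary was obtained in the case $n\ge4$. First I would note that the hypothesis $q\ge47$ carries everything: it forces $q\ge7$, so by Lemma~\ref{lem:connect} the geometry $\Gamma$ is connected, whence $\Pi_{1}(\Gamma)=\Pi_{1}(\Gamma,x)$ is well defined for any chosen base point $x$; and $q\ge47$ is precisely what makes Lemma~\ref{lem:Joe's lemma}, and hence Lemma~\ref{lem:degenerate triangles}, available.

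Next I would invoke the standard reduction already used for $n\ge4$ (see also \cite{Bue95}): every closed path based at $x$ is homotopic to a closed path contained in the point--line subgraph $\Sigma$, and every such path is in turn homotopic to a product of triangles, $4$-gons and pentagons of the collinearity graph. Thus $\Pi_{1}(\Gamma)$ is generated by the homotopy classes of triangles, quadrangles and pentagons, and it is enough to show that each of these is null homotopic. For a triangle $abca$: if $\langle a,b,c\rangle$ is nondegenerate of square-type, then $a,b,c$ together with the three square-type lines $\langle a,b\rangle,\langle b,c\rangle,\langle c,a\rangle$ all lie in $\{\langle a,b,c\rangle\}\cup\res{\langle a,b,c\rangle}$, so the triangle is geometric and hence null homotopic. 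Otherwise $\langle a,b,c\rangle$ is degenerate or of nonsquare-type; then by the triangle decomposition lemma above the triangle is homotopic to a product of geometric triangles and a single triangle two of whose vertices are orthogonal, and that one is null homotopic by Lemma~\ref{lem:degenerate triangles} (if it is degenerate) or by the lemma on nonsquare-type triangles with $e_{1}\perp e_{2}$ above (if it is of nonsquare-type). For quadrangles and pentagons nothing more is needed: the lemma ``any $4$-gon is null homotopic'' (built on Lemma~\ref{lem:degenerate 4-gons}) and the lemma ``every pentagon is null homotopic'' do it directly. Hence every generator of $\Pi_{1}(\Gamma)$ is trivial and $\Gamma$ is simply connected.

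I do not expect a genuinely new obstacle at this stage: the whole substance has already been spent in the preceding lemmas, whose common engine is the finite--field Lemma~\ref{lem:Joe's lemma} --- this is where the bound $q\ge47$ is actually forced --- used to manufacture the ``icosahedral'' configurations that kill degenerate triangles. The only care required is bookkeeping: checking that the trichotomy \emph{square-type / nonsquare-type / degenerate} for the $3$-space $\langle a,b,c\rangle$ is exhaustive and dovetails with the three triangle lemmas, and that the reduction of an arbitrary cycle to triangles, $4$-gons and pentagons is legitimate in the present geometry --- which it is, by the usual shortcutting argument, since for $q\ge47$ every square-type line carries $(q-1)/2\ge23$ square-type points, so in each shortcut the handful of points one must avoid is a negligible minority.
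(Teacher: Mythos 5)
Your proposal assembles the section's lemmas in essentially the same way the paper implicitly does (the paper gives no separate proof of this corollary; it is the sum of the preceding lemmas), and the logic is sound. One small citation slip: you invoke Lemma~\ref{lem:connect} for connectedness, but that lemma is the $n=2$ case and only gives diameter~$3$. For $n=3$ the relevant statement is the unlabeled lemma in Section~\ref{con} asserting that for $n\ge3$ the collinearity graph is connected of diameter~$2$; it is precisely diameter~$2$ that justifies reducing $\Pi_1(\Gamma)$ to triangles, $4$-gons and pentagons, as you later use. With that citation corrected, the argument matches the paper's.

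A separate remark on the constant: the section assumes $q\ge47$, but Lemma~\ref{lem:Joe's lemma} is stated for $q>47$, and its proof actually exhibits bad values $q=53,73$ above $47$; the final Theorems~\ref{thm1} and~\ref{thm2} accordingly exclude the explicit list $q=5,9,13,17,25,29,37,41,53,73$. So the genuinely correct hypothesis here is $q\notin\{5,9,13,17,25,29,37,41,53,73\}$ rather than $q\ge47$. This is a defect of the paper's own statement of the corollary, not of your proof, but your gloss that ``$q\ge47$ is precisely what makes Lemma~\ref{lem:Joe's lemma} available'' inherits it; strictly, the controlling condition is membership outside that exceptional list.
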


\section{The residual geometries}

We still need to discuss the residual geometries. Let $R$ be a residue
and $J$ its type set. If $J=J_{1}\cup J_{2}$ where $\max\left\{ i\in J_{1}\right\} +1\le\min\left\{ i\in J_{2}\right\} $
then the residual geometry is the product of the two residual geometries.
The product of two geometries is connected if both are nonempty and
is simply connected if one of the geometries is nonempty and the other
is connected. Therefore we need to investigate the case $J=\{i+1,\cdots i+k\}$.
If $U_{i}$ and respectively $U_{i+k+1}$ are the object of type $i$
respectively $i+k+1$ in the flag that defines the residue then the
residual geometry is isomorphic to the geometry $\Gamma$ on the space
$U_{i}^{\perp}\cap U_{i+k+1}$.  The second part of Theorem \ref{thm2}
follows from the results we have just proved.

\end{document}